\newtheorem{myrem}{Remark}[section]
\newtheorem{mytheo}{Theorem}[section]
\newtheorem{mylem}{Lemma}[section]
\theoremstyle{remark}
\numberwithin{equation}{section}
\begin{document}

\title{Tail Bounds on the Spectral Norm of Sub-Exponential Random Matrices}

%    Information for first author
%    Information for second author
%    Information for second author

\author{Guozheng Dai}
\address{School of Mathematical Sciences,  Zhejiang University, Hangzhou, 310027,  China.}
\email{11935022@zju.edu.cn}

\author{Zhonggen Su}
\address{School of Mathematical Sciences, Zhejiang University, Hangzhou, 310027,  China.}
\email{suzhonggen@zju.edu.cn}
\thanks{Zhonggen Su was supported  by  the National Natural Science Foundation of China (No.12271475 and No. 11871425) and fundamental research funds for central universities grants.}

\author{Hanchao Wang }
\address{ Institute for Financial Studies, Shandong University,  Jinan,  250100, China.}
\email{wanghanchao@sdu.edu.cn}
\subjclass[2020]{60B20, 46B09, 60E05, 60F10}
\thanks{Hanchao Wang  was supported by  the National Natural Science Foundation of China (No. 12071257 and No. 11971267 );   National Key R$\&$D Program of China (No. 2018YFA0703900 and No. 2022YFA1006104); Shandong Provincial Natural Science Foundation (No. ZR2019ZD41).
}

\date{}

\keywords{chaining argument; spectral norm; structured random matrix; tail bound
}

\begin{abstract}
Let $X$ be an $n\times n$ symmetric random matrix with independent but non-identically distributed entries. The deviation inequalities of the spectral norm of $X$ with Gaussian entries have been obtained by using the standard concentration of Gaussian measure results.  This paper establishes an upper tail bound of the spectral norm of $X$ with sub-Exponential entries.  Our   method relies upon a crucial ingredient of a novel chaining argument that  essentially involves both the particular structure of the sets used for the chaining and  the distribution of coordinates of a point on the unit sphere.
\end{abstract}

\maketitle

\section{Introduction and Main Result}
Random matrix theory has been  a rapidly developing area of probability theory in the past decades. Much  work has been done  about matrices with  exact or approximate symmetries, such as matrices with i.i.d. entries, for which precise analytic results and limit theorems are available. We refer the readers to \cite{introduction,2015Random,Tao.matrices} and the references therein for many fundamental  probability limit theorems with respect to eigenvalue statistics.

More recently, there has also been considerable interest in structured random matrices where the entries are no longer identically distributed.  Let   $X$ be the $n\times n$ symmetric random matrix with entries $X_{ij}=b_{ij}g_{ij}$, where $\{g_{ij}: i\ge j\}$ are independent standard Gaussian random variables  and $\{b_{ij}: i\ge j\}$ are deterministic nonnegative scalars. The structure of $X$ is controlled by the given variance pattern $\{b_{ij}, i\ge j\}$.  Note that it does not make much sense to investigate their asymptotic properties since the structure is  only defined for the finite  matrices and so there is no natural way to take the matrix size to infinity.

Due to this observation, the study of structured random matrices has a significantly different flavor than most classical random matrix theories. Instead,  the primary interest in this area is to obtain nonasymptotic  probability  inequalities that identify what structural parameters control the macroscopic properties of the underlying random matrix. In this paper we are interested in the location of the edge of the spectrum, that is, the matrix's spectral norm $\Vert X\Vert$,  and particularly focus on probabilistic inequalities (upper tail bounds) on $\Vert X\Vert$.

As  known to us, the mathematical expectation of a random variable can be expressed in terms of  its tail probability by the integration formula by parts. In particular, we have
\begin{align}\label{1}
	\mathbb{E}\Vert X\Vert=\int_{0}^{\infty} \mathbb{P}\{\Vert X\Vert>t\}\,dt.
\end{align}
 Due to this  identity, many authors study the bounds of $\mathbb{E}\Vert X\Vert$ and $\mathbb{P}\{\Vert X\Vert \ge t\}$  simultaneously. On the other hand,  it follows from Gaussian concentration inequality (see Theorem 5.6 in \cite{concentration}) that $\mathbb{P}\{\Vert X\Vert-\mathbb{E}\Vert X\Vert\ge t\}\le e^{-t^{2}/2}$. So one can easily derive  a tail probability bound from the expectation $\mathbb{E}\Vert X\Vert$. For the sake of reading, we give a very simple review of the relevant results in the case of sub-Gausssian entries.

Let $X=(b_{ij}g_{ij})_{n\times n}$ as above. Vershynin \cite{highdimension} proved that the spectral norm of $X$ is bounded by   $\sqrt{n}$ up to a factor with high probability, that is,
\begin{align}
	\mathbb{P}\{ \Vert X\Vert\ge C\max_{ij}b_{ij}\sqrt{n}+t\}\le \exp\Big(-\frac{t^{2}}{\max_{i,j}b_{ij}}\Big).\nonumber
\end{align}  It follows from (\ref{1})
\begin{align}\label{2}
	\mathbb{E}\Vert X\Vert\lesssim \max_{ij}b_{ij}\sqrt{n},
\end{align}
where and in the sequel we write $a\lesssim b$ if $a\le Cb$ for a universal constant $C$. In fact, Vershynin proved these results when the entries are independent sub-Gaussian variables. Although this bound capture the correct $\sqrt{n}$ rate for Wigner matrices with i.i.d. entries, it fails to be sharp in other  cases. For example, consider the diagonal matrix example with entries are independent standard Gaussian variables, then $\mathbb{E}\Vert X\Vert\sim\sqrt{\log n}$.

Bandeira and van Handel \cite{aop} obtained a better bound of $\mathbb{E}\Vert X\Vert$ than (\ref{2}), namely
\begin{align}\label{3}
	\mathbb{E}\Vert X\Vert\lesssim \max_{i}\sqrt{\sum\limits_{j}b_{ij}^{2}}+\sqrt{\log n}\max_{ij}b_{ij}.
\end{align}
This inequality is optimal in a surprisingly general setting, including Wigner matrices and diagonal matrices.  Banderia and van Handel \cite{aop} also studied (\ref{3}) for heavy-tailed entries and even bounded entries. Furthermore, they   considered the tail bounds for $\Vert X\Vert$ and obtained for the Gaussian  entries
\begin{align}
	\mathbb{P}\Big\{\Vert X\Vert\ge C(\max_{i}\sqrt{\sum_{j}b_{ij}^{2}}+\max_{i,j}b_{ij}\sqrt{\log n})+t\Big\}\le e^{-t^{2}/4\max_{i,j}b_{ij}^{2}}\nonumber.
\end{align}
And they   developed a handy tail bound of $\Vert X\Vert$ when the entries are bounded.

Only recently, did Latala, van Handel and Youssef \cite{inventions} obtain a sharp bound for $\Vert X\Vert$
\begin{align}\label{4}
	\mathbb{E}\Vert X\Vert\asymp \max_{i}\sqrt{\sum\limits_{j}b_{ij}^{2}}+\max_{ij}b_{ij}^{*}\sqrt{\log i},
\end{align}
where and in the sequel we write $a\asymp b$ if $a\lesssim b$ and $b\lesssim a$. Here the matrix $\{b_{ij}^{*}\}$ is obtained by permuting the rows and columns of the matrix $\{b_{ij}\}$ such that
\begin{align}
	\max_{j}b_{1j}^{*}\ge \max_{j}b_{2j}^{*}\ge \cdots \ge \max_{j}b_{nj}^{*}.\nonumber
\end{align}
They in fact gave sharp bounds for Schatten $p$-norm of $X$ for any $2\le p\le \infty$, and (\ref{4}) is the special case of $p=\infty$. Similar to \cite{aop}, Latala, van Handel and Youssef \cite{inventions} also extended their results to the case where $X_{ij}$ are non-gaussian, and contained many other exciting things which we do not cover here.

In addition to the works mentioned above,  we   refer interested readers to \cite{free,tams,structure,ptrfvershynin, ejpcai,ejp,Jmlr,Talagrand,bernoullivershynin} for more information about the expectations and   tail bounds of the spectral norm of random matrices.

Throughout the remainder of this paper, $X$ will denote an $n\times n$ symmetric random matrix with independent but non-identically distributed centered sub-Exponential random entries in the absence of additional instructions. For a sub-Exponential random variable $\xi$, denote by $\Vert \xi \Vert_{\psi_{1}}$ the sub-Exponential norm, that is,
$$\Vert \xi\Vert_{\psi_{1}}=\inf \Big\{K>0; \mathbb{E}\exp(\frac{\vert \xi\vert}{K})\le 2\Big\}.$$
In such a  setting, we would prefer use the sub-Exponential norms  than   variances.  Note that the $\{\Vert X_{ij}\Vert_{\psi_{1}}\}$ actually controls the structure of $X$, just like $\{b_{ij}\}$ in the case of $X_{ij}=b_{ij}g_{ij}$.One may better understand this through Lemma 2.2 below.

Now we are ready to state our main result. For convenience, set
$$
 b =\max_{1\le i\le n}\Vert X_{ii}\Vert_{\psi_{1}}\log n, \quad \sigma_{1}=\max_{i, j\le n}\Vert X_{ij}\Vert_{\psi_{1}}, \quad \sigma_{2}=\max\limits_{i\neq j}\Vert X_{ij}\Vert_{\psi_{1}}.
$$

%%Let $X_{j}=(X_{1j}, X_{2j},\cdots, X_{nj})'$ be the $j$th column of $X$.
\begin{mytheo}
	The spectral norm of $X$ satisfies
	\begin{align}\label{5}
			\mathbb{P}\Big\{\Vert X\Vert\ge
		C(b+\sigma_{2}\sqrt{n})+t  \Big\}\le C_{0}\exp \Big(-C_{1}\min (\frac{t^{2}}{\sigma_{1}^{2}}, \frac{t}{\sigma_{1}})\Big),
	\end{align}
where $C, C_{0}, C_{1}$ are universal constants and $t\ge 0$.
\end{mytheo}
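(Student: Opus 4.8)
The plan is to realize the spectral norm as the supremum of a random process over the sphere and control it by chaining. Since $X$ is symmetric,
\[
\Vert X\Vert=\sup_{x\in S^{n-1}}\abs{\langle Xx,x\rangle}=\sup_{x\in S^{n-1}}\Big\vert\sum_{i,j}X_{ij}x_ix_j\Big\vert ,
\]
so it suffices to bound the supremum of the quadratic form $f(x)=\sum_{i,j}X_{ij}x_ix_j$. First I would split $X=D+A$ into its diagonal part $D$ and off-diagonal part $A$ and treat the two contributions separately, since they are governed by different structural parameters ($\max_i\Vert X_{ii}\Vert_{\psi_{1}}$ versus $\sigma_2$) and produce the two summands $b$ and $\sigma_2\sqrt{n}$ in (\ref{5}); at the end I would recombine them by a union bound.

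The diagonal part is elementary: $D$ is diagonal, so $\Vert D\Vert=\max_i\abs{X_{ii}}$. Each $X_{ii}$ is centered sub-Exponential with $\Vert X_{ii}\Vert_{\psi_{1}}=b/\log n$, so a union bound over the $n$ diagonal entries together with the one-sided sub-Exponential tail gives
\[
\mathbb{P}\Big\{\max_i\abs{X_{ii}}\ge b+s\Big\}\le 2n\exp\Big(-\frac{b+s}{\max_i\Vert X_{ii}\Vert_{\psi_{1}}}\Big)\le 2\exp\Big(-\frac{s}{\sigma_1}\Big),
\]
where the factor $n$ is exactly absorbed by the $\log n$ inside $b$. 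This already yields the $b$ term and contributes the linear branch $t/\sigma_1$ of the final tail.

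The heart of the argument is the off-diagonal part $\Vert A\Vert=\sup_{x}\abs{\sum_{i\ne j}X_{ij}x_ix_j}$, which I would attack by chaining over $S^{n-1}$. Using symmetry of $A$, an increment along the chain is a bilinear form, $f(x)-f(y)=\langle Au,v\rangle=\sum_{i\ne j}X_{ij}u_iv_j$ with $u=x-y$ and $v=x+y$, and since its summands are independent centered sub-Exponential variables, Bernstein's inequality yields the mixed tail
\[
\mathbb{P}\{\abs{\langle Au,v\rangle}\ge s\}\le 2\exp\Big(-c\min\Big(\frac{s^2}{\sigma_2^2\Vert u\Vert_2^2\Vert v\Vert_2^2},\ \frac{s}{\sigma_2\Vert u\Vert_\infty\Vert v\Vert_\infty}\Big)\Big).
\]
Thus the process has sub-Gaussian increments in an $\ell_2$-type metric (coefficient $\sigma_2$, with $\Vert v\Vert_2\le 2$ along the chain) and sub-Exponential increments in an $\ell_\infty$-type metric. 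A two-scale generic chaining then bounds the supremum by a $\gamma_2$ functional of $S^{n-1}$ in the $\ell_2$ metric plus a $\gamma_1$ functional in the $\ell_\infty$ metric; the former is of order $\sigma_2\sqrt{n}$ and supplies the second main term, while carrying the two tail branches through the chain reproduces the $\min(t^2/\sigma_1^2,t/\sigma_1)$ form, with $\sigma_2\le\sigma_1$ absorbing the off-diagonal coefficient into the $\sigma_1$ of (\ref{5}).

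The hard part will be the $\ell_\infty$ (sub-Exponential, linear) branch of the chaining. A naive uniform $\varepsilon$-net has cardinality $e^{O(n)}$, so union-bounding the sub-Exponential increments would cost a factor $n$ in the exponent and destroy the $\sqrt{n}$ scaling. To avoid this I would build the chaining sets not as generic nets but adapted to the coordinate structure of the sphere: quantize coordinates to a dyadic grid and exploit that a unit vector has at most $s^{-2}$ coordinates exceeding $s$ in magnitude, so that $\Vert u\Vert_\infty\Vert v\Vert_\infty$ stays controlled at every level. This is precisely where both the \emph{particular structure of the chaining sets} and the \emph{distribution of the coordinates of a point on the sphere} enter, and it is what keeps the $\gamma_1$ contribution absorbed into $b+\sigma_2\sqrt{n}$ rather than blowing up. Once the off-diagonal supremum is controlled with its tail, combining it with the diagonal estimate via a union bound yields (\ref{5}).
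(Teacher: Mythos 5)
Your diagonal estimate is fine (the union bound with the $\log n$ built into $b$ is exactly how that term arises, up to constants), your Bernstein bound for the increments is correct, and you have located the crux accurately: beating the cardinality of the nets in the sub-Exponential ($\ell_\infty$) branch without destroying the $\sqrt{n}$ scaling. But that crux is exactly where the proposal stops being a proof: the claim that coordinate-adapted dyadic nets keep ``the $\gamma_1$ contribution absorbed into $b+\sigma_2\sqrt{n}$'' is the statement that needs proving, and the construction you sketch does not deliver it. Quantizing coordinates on a dyadic grid and using sparsity (at most $s^{-2}$ coordinates exceed $s$) is a uniform covering construction; with a budget of $2^{2^k}$ points its $\ell_\infty$ accuracy is of order $\sqrt{\log n/2^k}$ (this is the Maurey/dual Sudakov bound $\log N(B_2^n,\epsilon B_\infty^n)\lesssim \epsilon^{-2}\log n$, which is essentially sharp in the relevant range), so the linear branch of the chain sums to $\sum_k 2^k\sqrt{\log n/2^k}\asymp \sqrt{n}\log n$, and no choice of nets whose accuracy is uniform over the sphere can get below $\sqrt{n\log n}$. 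So, carried out as described, your off-diagonal bound is $\sigma_2\sqrt{n}\log n$, not $\sigma_2\sqrt{n}$. It is in fact true that $\gamma_1(S^{n-1},\ell_\infty)\lesssim\sqrt{n}$, so your route is not doomed in principle, but the only known way to see this is the hard direction of Talagrand's theorem on canonical exponential processes: $\gamma_2(B_2^n,\ell_2)+\gamma_1(B_2^n,\ell_\infty)\asymp \mathbb{E}\sup_{t\in B_2^n}\sum_i t_i\nu_i=\mathbb{E}\vert \nu\vert_2\le\sqrt{2n}$ for i.i.d. symmetric exponential $\nu_i$. Invoking that theorem together with the tail version of mixed-tail generic chaining (Dirksen) would complete your argument as a genuinely different, citation-heavy proof; without it, the central step is a gap.

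It is worth seeing how the paper evades this obstruction, because the mechanism is different from chaining in a way your sketch misses. The paper does a \emph{one-step} approximation: a unit vector $z$ is sliced by coordinate magnitude into blocks $P_{E_k}z$ with $\vert E_k\vert\le n/4^{k-1}$ and $\Vert P_{E_k}z\Vert_\infty\le\sqrt{4^k/n}$, each block is approximated once by a net, and the approximation error is absorbed \emph{multiplicatively} via the contraction inequality $\Vert X\Vert\le 10\sup_{x\in\mathcal{M}}\langle Xx,x\rangle$, rather than being chained down to finer scales. Then, in the union bound for the off-diagonal form, the exponent that must beat the level-$k$ cardinality (of order $(n/4^k)\log 4^k$) is multiplied by the $\ell_\infty$-norm of the \emph{block itself}, $\sqrt{4^k/n}$ --- not by an $\ell_\infty$ approximation error --- so the level-$k$ threshold is of order $\sigma_2 t\sqrt{n/4^k}\log 4^k$ and the thresholds sum geometrically to $\sigma_2\sqrt{n}\,t$. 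This trade-off (huge cardinality only where the coordinates are uniformly small), together with the splitting of $D_x$ into within-block and cross-block parts and a decoupling step to handle the within-block sums, is the actual content of the paper's proof; your proposal names the right ingredients but contains no counterpart of this bookkeeping, which is where the theorem lives.
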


The proof of Theorem \ref{5} will be postponed to Section 4. To conclude the Introduction, we would like to briefly review the relevant literature and our basic ideas.

As mentioned above,  Banderia and van Handel \cite{aop} obtained  in the   Gaussian case the the expectation and   tail bounds  for  $\|X\|$ by applying standard Gaussian concentration techniques (first estaimate the bound for expectation and then control tail probability).   However, in our setting, i.e., sub-Exponential entries, due to the lack of the corresponding concentration inequalities, such an argument is not applicable even though  explicit bounds of $\mathbb{E}\Vert X\Vert$ is known.

Vershynin  \cite{introduction} (see Theorem 4.4.5) studied the tail bounds for $\|X\|$ by an $\varepsilon$-net argument for sub-Gaussian matrix $X$.  Specifically, he treated the spectral norm of $X$ as the
supremum of a stochastic process indexed by the Euclidean unit sphere $S^{n-1}$, that is
$$ \Vert X\Vert=\sup_{x,y \in S^{n-1}} \langle Xx, y\rangle,$$
and then approximated it by the maximum of $\langle Xx, y\rangle$ through a finite $\varepsilon$-net. However, if we simply follow his argument then we could only    obtain the following result for sub-Exponential matrix
\begin{align}\label{6}
	\mathbb{P}\{ \Vert X\Vert\ge C_{2}\sigma_{2}n\}\le e^{-n},
\end{align}
where $C_{2}$ is a universal constant.

On the other hand, Tao and Vu \cite{cmp} proved  that the largest eigenvalue is approximately the same order as $\sqrt{n}$ for independent but not identically distributed  entries with uniformly exponential decay when $n$ is large enough. Motivated by this work, we expect a tail bound with $\sigma_2\sqrt{n}$ for $\|X\|$ in (\ref{6}).

To prove Theorem \ref{5}, we shall adapt the stochastic process approach. Instead of using  directly an $\varepsilon$-net argument to an unit vector, we slice a point on the unit sphere into many, say $l(n)$, small parts, and then find a sufficiently good net to approximate every part. This idea is inspired by Adamczak, Litvak,  Pajor and Tomczak-Jaegermann \cite{jams}.

The rest of this article is organized as follows. In Section 2, we give some notations and lemmas used throughout the proofs. In Section 3, we give a concentration inequality for such a particular case like diagonal matrix as a warm-up. In Section 4, we complete the proof of Theorem \ref{5}. Section 5 partly extends our results to more general cases.

\section{Basic Lemmas}
In this section, we shall introduce some important lemmas that will be used to prove our main result. We first give some notations. We equip $\mathbb{R}^{n}$ with natural scalar product $\left\langle\cdot, \cdot \right\rangle$ and the natural Euclidean norm $\vert \cdot\vert$. We also denote by the same notation  $\vert \cdot\vert$ the cardinality of a set. By $\Vert M\Vert$ we shall denote the spectral norm of an $n\times n$ matrix $M$ as we mentioned before, that is, $\Vert M\Vert=\sup_{y\in S^{n-1}}\vert My\vert$, where $S^{n-1}=\{y\in \mathbb{R}^{n}: \vert y\vert=1\}$.
We will also use the following notation $\text{supp}\, y=\{i: y=(y(1),\cdots, y(n))'\in \mathbb{R}^{n}, y(i)\neq 0\}$.  Given a set $E\subset\{1,\cdots ,n\}$, by $P_{E}$ we denote the orthogonal projection from $\mathbb{R}^{n}$ onto the coordinate subspace of vectors whose support is in $E$. Such a subspace is denoted by $\mathbb{R}^{E}$.

 Let $B_{2}^{n}=\{x\in \mathbb{R}^{n}: \vert x\vert\le 1\}$ and $B_{\infty}^{n}=\{x=(x(1),\cdots,x(n))\in \mathbb{R}^{n}: \max\limits_{i} \vert x(i)\vert\le 1\}$. Given an $E\subset\{1,\cdots, n\}$ and $\varepsilon, \alpha \in (0,1]$, by $\mathcal{N}(E, \varepsilon, \alpha)$ we denote an $\varepsilon$-net of  $B_{2}^{n}\cap\alpha B_{\infty}^{n}\cap \mathbb{R}^{E}$ in the Euclidean metric. Note that $\vert \mathcal{N}(E, \varepsilon, \alpha )\vert\le (3/\varepsilon)^{\vert E\vert}$, which can be obtained from Corollary 4.2.13 in \cite{introduction}.

Throughout the paper, we will use $C, C_{0}, C_{1},\cdots$ to denote some universal positive constants (independent of $n$ and the random variables), which may differ from section to section. Moreover, we will use $C(\alpha)$ to denote some positive constants only depending on the parameter $\alpha$.

Next, we shall give the following lemma about sub-Exponential properties. We omit the proof for simplicity and refer interested readers to Proposition 2.7.1 of \cite{highdimension} for detailed proof.

\begin{mylem}[Sub-Exponential properties]
	Let $\eta$ be a sub-Exponential random variable. Then the following properties are equivalent.
	\begin{enumerate}
		\item $\mathbb{P}\{\vert \eta\vert\ge t \}\le 2\exp(-t/K_{1})$ for all $t\ge 0$;
		\item $(\mathbb{E}\vert \eta\vert^{p})^{1/p}\le K_{2}p$ for all $p\ge 1$;
		\item $\mathbb{E}\exp(\lambda\vert\eta\vert)\le \exp(K_{3}\lambda)$ for all $\lambda$ such that $0\le \lambda\le \frac{1}{K_{3}}$;
		\item $\mathbb{E}\exp(\vert\eta\vert/K_{4})\le2$.
		
		If $\mathbb{E}\eta=0$, we also have the following equivalent property:
		\item $\mathbb{E}\exp(\lambda X)\le \exp(K_{5}^{2}\lambda^{2})$  for all $\lambda$ such that $\vert \lambda\vert\le \frac{1}{K_{5}}$.
	\end{enumerate}
\end{mylem}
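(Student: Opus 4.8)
The plan is to establish the equivalences through a cycle of implications among (1)--(4), which hold for an arbitrary sub-Exponential $\eta$, and then to fold in the centered property (5) under the extra hypothesis $\mathbb{E}\eta=0$. Concretely, I would prove $(1)\Rightarrow(2)\Rightarrow(3)\Rightarrow(4)\Rightarrow(1)$, which shows that the first four properties are equivalent with constants $K_{1},\dots,K_{4}$ comparable up to universal factors, and then close the loop on (5) by proving $(3)\Rightarrow(5)$ and $(5)\Rightarrow(1)$.

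For the four-way cycle I would argue as follows. For $(1)\Rightarrow(2)$, write $\mathbb{E}\vert\eta\vert^{p}=\int_{0}^{\infty}p\,t^{p-1}\,\mathbb{P}\{\vert\eta\vert\ge t\}\,dt$, substitute the tail bound $2e^{-t/K_{1}}$, and recognize the resulting integral as a Gamma function, so that $\mathbb{E}\vert\eta\vert^{p}\le 2K_{1}^{p}\,\Gamma(p+1)$; taking $p$-th roots and using $\Gamma(p+1)^{1/p}\lesssim p$ gives (2). For $(2)\Rightarrow(3)$, expand $\mathbb{E}e^{\lambda\vert\eta\vert}=\sum_{p\ge0}\lambda^{p}\mathbb{E}\vert\eta\vert^{p}/p!$, bound $\mathbb{E}\vert\eta\vert^{p}\le(K_{2}p)^{p}$, and use $p!\ge(p/e)^{p}$ to dominate the series by the geometric sum $\sum_{p}(eK_{2}\lambda)^{p}$, which stays bounded for $\lambda$ below a universal multiple of $1/K_{2}$. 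For $(3)\Rightarrow(4)$, evaluate (3) at the endpoint $\lambda=1/K_{3}$ and rescale the constant so that the bound does not exceed $2$. Finally $(4)\Rightarrow(1)$ is a one-line Chernoff estimate, $\mathbb{P}\{\vert\eta\vert\ge t\}\le e^{-t/K_{4}}\,\mathbb{E}e^{\vert\eta\vert/K_{4}}\le 2e^{-t/K_{4}}$.

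It remains to handle the centered property (5). For $(3)\Rightarrow(5)$ I would use that (2) is now available and expand the moment generating function as $\mathbb{E}e^{\lambda\eta}=1+\lambda\,\mathbb{E}\eta+\sum_{p\ge2}\lambda^{p}\mathbb{E}\eta^{p}/p!$; the hypothesis $\mathbb{E}\eta=0$ annihilates the linear term, and bounding $\vert\mathbb{E}\eta^{p}\vert\le\mathbb{E}\vert\eta\vert^{p}\le(K_{2}p)^{p}\le(eK_{2})^{p}p!$ reduces the remaining tail to $\sum_{p\ge2}(eK_{2}\vert\lambda\vert)^{p}$, which is at most $C K_{2}^{2}\lambda^{2}$ for $\vert\lambda\vert$ below a universal multiple of $1/K_{2}$; hence $\mathbb{E}e^{\lambda\eta}\le 1+CK_{2}^{2}\lambda^{2}\le\exp(K_{5}^{2}\lambda^{2})$ with $K_{5}$ a universal multiple of $K_{2}$. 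For $(5)\Rightarrow(1)$ I would optimize the Chernoff bound $\mathbb{P}\{\eta\ge t\}\le\exp(-\lambda t+K_{5}^{2}\lambda^{2})$ over $\lambda\in[0,1/K_{5}]$, taking $\lambda=t/(2K_{5}^{2})$ in the small-$t$ regime (which even yields a sub-Gaussian bound) and $\lambda=1/K_{5}$ in the large-$t$ regime (which yields the sub-Exponential tail); applying the same argument to $-\eta$ via (5) at negative $\lambda$ controls the left tail, and the two estimates combine into (1).

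The step requiring genuine care, and the sole place where centering is used, is $(3)\Rightarrow(5)$. Without the hypothesis $\mathbb{E}\eta=0$ the moment generating function behaves like $1+\lambda\,\mathbb{E}\eta+O(\lambda^{2})$ near the origin, and the surviving linear term rules out any bound of the pure Gaussian shape $\exp(K_{5}^{2}\lambda^{2})=1+O(\lambda^{2})$. Centering removes this term so that the expansion begins at order $\lambda^{2}$, after which the main task is to verify that the geometric tail of the series is itself $O(\lambda^{2})$ on an explicit range of $\lambda$; keeping track of the interplay between that admissible range and the constant in the quadratic bound is where the bookkeeping lives.
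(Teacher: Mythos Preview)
Your argument is correct and is exactly the standard route (tail-to-moments via the layer-cake formula, moments-to-MGF via Taylor expansion and Stirling, MGF-to-tail via Markov/Chernoff, with centering used precisely to kill the linear term in the expansion for (5)). The paper itself does not give a proof of this lemma: it omits the argument and refers to Proposition~2.7.1 of \cite{highdimension}, whose proof is essentially the one you have written, so your proposal matches the intended approach.
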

	We remark that the parameters $K_{1},\cdots,K_{5}$ that appeared in Lemma 2.1 are not universal constants. In fact, $K_{i}=C_{i}\Vert \eta\Vert_{\psi_{1}}$ for $i=1,\cdots, 5$, where $C_{1},\cdots ,C_{5}$ are universal constants. Moreover, $K_{i}\lesssim K_{j}$ for any $i, j\in \{1, 2, \cdots, 5\}$. One can refer to \cite{highdimension} for more information.

Variance and sub-Exponential norm are two essential parameters of sub-Exponential random variables. The following lemma illustrates the connection between them.

\begin{mylem}
	Let $\eta$ be a centered sub-Exponential random variable. Then we have $\mathbb{E} \eta^{2}\lesssim\Vert \eta\Vert_{\psi_{1}}^{2}$.
\end{mylem}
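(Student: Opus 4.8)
The plan is to read off the bound directly from the definition of the sub-Exponential norm, which is the shortest route and avoids invoking the equivalent formulations of Lemma 2.1. Write $K=\Vert \eta\Vert_{\psi_{1}}$, so that by definition $\mathbb{E}\exp(\vert \eta\vert/K)\le 2$. The key observation is that the second moment can be extracted from this single scalar inequality by a Taylor expansion of the exponential: since $e^{y}\ge 1+y+y^{2}/2$ for every $y\ge 0$, substituting $y=\vert\eta\vert/K$ and taking expectations yields
\begin{align}
2\ge \mathbb{E}\exp\Big(\frac{\vert\eta\vert}{K}\Big)\ge 1+\frac{\mathbb{E}\vert\eta\vert}{K}+\frac{\mathbb{E}\eta^{2}}{2K^{2}}\ge 1+\frac{\mathbb{E}\eta^{2}}{2K^{2}},\nonumber
\end{align}
where in the last step I simply discard the nonnegative linear term $\mathbb{E}\vert\eta\vert/K$. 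Rearranging gives $\mathbb{E}\eta^{2}\le 2K^{2}=2\Vert\eta\Vert_{\psi_{1}}^{2}$, which is exactly the claimed inequality with universal constant $2$.

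An equally valid alternative is to quote the moment characterization, property (2) of Lemma 2.1, which states $(\mathbb{E}\vert\eta\vert^{p})^{1/p}\le K_{2}p$ for all $p\ge 1$ with $K_{2}\lesssim \Vert\eta\Vert_{\psi_{1}}$. Taking $p=2$ immediately gives $\mathbb{E}\eta^{2}\le 4K_{2}^{2}\lesssim \Vert\eta\Vert_{\psi_{1}}^{2}$. I would prefer the first argument since it is self-contained and produces an explicit constant, but either suffices.

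There is no genuine obstacle here: the only nontrivial move is the termwise comparison $e^{y}\ge 1+y+y^{2}/2$, and the proof is essentially a one-line consequence of the defining property of $\Vert\cdot\Vert_{\psi_{1}}$. I note in passing that the centering hypothesis $\mathbb{E}\eta=0$ stated in the lemma is not actually needed for this bound; it is presumably included only because $\eta$ will always be a centered entry of $X$ in the applications that follow.
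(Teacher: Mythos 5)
Your proof is correct, and your preferred argument takes a genuinely different route from the paper: the paper's entire proof is precisely what you list as the alternative, namely quoting property (2) of Lemma 2.1 at $p=2$ together with the remark that $K_{2}\lesssim \Vert\eta\Vert_{\psi_{1}}$. Your main argument instead extracts the second moment directly from the defining inequality $\mathbb{E}\exp(\vert\eta\vert/K)\le 2$ via the pointwise bound $e^{y}\ge 1+y+y^{2}/2$ for $y\ge 0$. What this buys: it is self-contained (Lemma 2.1 is stated in the paper without proof, with its constants imported from Vershynin's book), and it produces the explicit constant $2$ rather than an unspecified universal one. One pedantic remark: since $\Vert\eta\Vert_{\psi_{1}}$ is defined as an infimum, the step where you set $K=\Vert\eta\Vert_{\psi_{1}}$ and assert $\mathbb{E}\exp(\vert\eta\vert/K)\le 2$ needs a word of justification (monotone convergence as $K\downarrow\Vert\eta\Vert_{\psi_{1}}$); cleaner still, run your computation for an arbitrary admissible $K$, obtaining $\mathbb{E}\eta^{2}\le 2K^{2}$, and then take the infimum over such $K$. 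This costs nothing and removes the issue entirely. Your closing observation is also right: centering plays no role in this bound, in either your argument or the paper's, and is stated only because the entries of $X$ are centered in the intended application.
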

\begin{proof}
	It follows from the sub-Exponential properties 2 when $p=2$,
	$
		\mathbb{E}\eta^{2}\lesssim K_{2}^{2}
	$ ($K_{2}$ is the parameter that appeared in Lemma 2.1).
	Then we get the desired result directly.
\end{proof}

The following lemma, Bernstein's inequality, is a concentration inequality for sums of independent sub-Exponential random variables. One can refer to \cite{highdimension} for  proof.

\begin{mylem}
	Let $\eta_{1},\cdots,\eta_{n}$ be independent mean zero sub-Exponential random variables, and let $a=(a_{1},\cdots, a_{n})^{'}\in \mathbb{R}^{n}$. Then, for every $t\ge 0$, we have
	\begin{align}
		\mathbb{P}\{ \sum_{i=1}^{n}a_{i}\eta_{i}\ge t\}\le \exp\bigg(-C\min(\frac{t^{2}}{K^{2}\Vert a\Vert_{2}^{2}}, \frac{t}{K\Vert a\Vert_{\infty}})\bigg),\nonumber
	\end{align}
where $K=\max_{i}\Vert \eta_{i}\Vert_{\psi_{1}}$ and $C$ is a universal constant.
\end{mylem}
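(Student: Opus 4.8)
The plan is to prove this by the classical Chernoff (exponential Markov) argument, combined with the sub-Exponential moment generating function bound furnished by property (5) of Lemma 2.1. First I would fix $\lambda>0$ and apply Markov's inequality to $\exp(\lambda\sum_{i}a_{i}\eta_{i})$, then factor the expectation using independence:
\begin{align}
	\mathbb{P}\Big\{\sum_{i=1}^{n}a_{i}\eta_{i}\ge t\Big\}\le e^{-\lambda t}\,\mathbb{E}\exp\Big(\lambda\sum_{i=1}^{n}a_{i}\eta_{i}\Big)=e^{-\lambda t}\prod_{i=1}^{n}\mathbb{E}\exp(\lambda a_{i}\eta_{i}).\nonumber
\end{align}
This reduces the task to controlling each one-dimensional exponential moment $\mathbb{E}\exp(\lambda a_{i}\eta_{i})$.

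Next I would invoke property (5) of Lemma 2.1. Since each $\eta_{i}$ is centered sub-Exponential, there exist universal constants $c,C>0$ such that $\mathbb{E}\exp(s\eta_{i})\le\exp(CK^{2}s^{2})$ whenever $|s|\le c/K$, where $K=\max_{i}\Vert\eta_{i}\Vert_{\psi_{1}}$; here I use $\Vert\eta_{i}\Vert_{\psi_{1}}\le K$ and absorb the universal factor relating $K_{5}$ to $\Vert\eta_{i}\Vert_{\psi_{1}}$ recorded in the remark after Lemma 2.1. Setting $s=\lambda a_{i}$ gives $\mathbb{E}\exp(\lambda a_{i}\eta_{i})\le\exp(CK^{2}\lambda^{2}a_{i}^{2})$ as soon as $|\lambda a_{i}|\le c/K$, a bound valid regardless of the sign of $a_{i}$ because the estimate in property (5) is symmetric under $s\mapsto-s$. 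To enforce this simultaneously for all $i$ I would restrict to $0\le\lambda\le c/(K\Vert a\Vert_{\infty})$, and then the product over $i$ yields
\begin{align}
	\mathbb{P}\Big\{\sum_{i=1}^{n}a_{i}\eta_{i}\ge t\Big\}\le\exp\Big(-\lambda t+CK^{2}\lambda^{2}\Vert a\Vert_{2}^{2}\Big).\nonumber
\end{align}

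The final and only delicate step is to optimize the exponent $-\lambda t+CK^{2}\lambda^{2}\Vert a\Vert_{2}^{2}$ over the admissible interval $0\le\lambda\le c/(K\Vert a\Vert_{\infty})$. The unconstrained minimizer is $\lambda^{\ast}=t/(2CK^{2}\Vert a\Vert_{2}^{2})$, and I would split into two cases according to whether $\lambda^{\ast}$ obeys the constraint. When $t$ is small enough that $\lambda^{\ast}\le c/(K\Vert a\Vert_{\infty})$, taking $\lambda=\lambda^{\ast}$ produces the sub-Gaussian tail $\exp(-c't^{2}/(K^{2}\Vert a\Vert_{2}^{2}))$; when $t$ is large the constraint binds, and taking $\lambda$ at the boundary $c/(K\Vert a\Vert_{\infty})$ leaves the linear term dominant and yields the sub-Exponential tail $\exp(-c''t/(K\Vert a\Vert_{\infty}))$. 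Combining the two regimes gives precisely the minimum of the quadratic and linear exponents in the statement. I expect no conceptual obstacle here; the only care needed is bookkeeping the universal constants through the constrained optimization so that a single constant $C$ governs both regimes, which one obtains by taking $C$ no larger than the smaller of the two constants $c'$ and $c''$.
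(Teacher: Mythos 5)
Your proposal is correct: the Chernoff bound, the factorization by independence, the MGF estimate from property (5) of Lemma 2.1 (with the $\psi_1$-norm uniformly bounded by $K$ and the constraint $0\le\lambda\le c/(K\Vert a\Vert_{\infty})$), and the two-regime constrained optimization constitute the classical proof of Bernstein's inequality, and your handling of the constants and of the boundary case is sound. The paper itself gives no proof of this lemma---it simply cites Vershynin's book \cite{highdimension}, where exactly this argument is carried out---so your proposal coincides with the intended (standard) proof rather than offering a different route.
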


Then, we give the following lemma on the maximum of $n$ independent exponential distributed random variables.
\begin{mylem}
	Let $\eta_{1},\cdots, \eta_{n}$ be a sequence of independent exponential distributed random variables with parameter $\lambda =1$. Denote their order statistics by $\eta_{(1)}\le \cdots \le \eta_{(n)}$. Consider the following linear changes,
	\begin{align}
		T_{1}=2n\eta_{(1)}, T_{2}=2(n-1)(\eta_{(2)}-\eta_{(1)}),\cdots, T_{n}=2(\eta_{(n)}-\eta_{(n-1)}).\nonumber
	\end{align}
Then, we have $\{T_{i}\}$ are identically independent chi-square distributed random variables whose degrees of freedom are 2.\end{mylem}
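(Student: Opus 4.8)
The plan is to identify the joint law of $(T_1,\dots,T_n)$ directly through a change of variables, starting from the well-known joint density of the order statistics; this is the classical R\'enyi representation combined with the observation that a scaled exponential is a chi-square, so the only genuine work is bookkeeping a Jacobian. Since $\eta_1,\dots,\eta_n$ are i.i.d.\ exponential with parameter $1$, the joint density of $(\eta_{(1)},\dots,\eta_{(n)})$ is $n!\,\exp\!\big(-\sum_{i=1}^n x_i\big)$ on the ordered region $\{0\le x_1\le\cdots\le x_n\}$ and zero elsewhere.

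First I would pass from the order statistics to the raw spacings $s_i=x_i-x_{i-1}$ (with the convention $x_0=0$), $i=1,\dots,n$. This map sends the ordered region bijectively onto the positive orthant $\{s_i\ge 0\}$ and is lower-triangular with unit diagonal, so its Jacobian equals $1$. The key algebraic identity to record here, obtained by writing $x_i=\sum_{j=1}^i s_j$ and interchanging the order of summation, is
\[
\sum_{i=1}^n x_i=\sum_{i=1}^n\sum_{j=1}^i s_j=\sum_{j=1}^n (n-j+1)\,s_j .
\]
Hence the joint density of $(s_1,\dots,s_n)$ is $n!\,\exp\!\big(-\sum_{j=1}^n (n-j+1)s_j\big)$ on $\{s_j\ge 0\}$.

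Second, I would rescale the spacings by setting $D_i=(n-i+1)s_i$, so that $T_i=2D_i$ and $\eta_{(i)}-\eta_{(i-1)}=s_i$. The map $s\mapsto D$ is diagonal with Jacobian $\prod_{i=1}^n (n-i+1)=n!$, so the inverse change of variables contributes a factor $1/n!$. Combining the two steps, the joint density of $(D_1,\dots,D_n)$ becomes
\[
n!\,\exp\Big(-\sum_{i=1}^n D_i\Big)\cdot\frac{1}{n!}=\prod_{i=1}^n e^{-D_i}\qquad (D_i\ge 0),
\]
where the $n!$ from the order-statistic density cancels the $n!$ from the rescaling. This exhibits the joint density as a product of exponential densities, so the $D_i$ are independent and each $\mathrm{Exp}(1)$ distributed.

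Finally, since $T_i=2D_i$ and the density of $2\eta$ for $\eta\sim\mathrm{Exp}(1)$ is $\tfrac12 e^{-t/2}$ on $t>0$, which is exactly the $\chi^2$ density with two degrees of freedom, I conclude that the $T_i$ are i.i.d.\ chi-square random variables with $2$ degrees of freedom. The main obstacle is purely computational bookkeeping: one must verify the summation identity for $\sum x_i$ and track the two Jacobian factors so that the $n!$ terms cancel cleanly. An alternative route avoiding Jacobians would exploit memorylessness of the exponential, namely that $\eta_{(1)}=\min_i\eta_i\sim\mathrm{Exp}(n)$ and that, conditionally on $\eta_{(1)}$, the increments of the surviving variables are again i.i.d.\ $\mathrm{Exp}(1)$, yielding the distribution and independence of the $D_i$ by induction on $n$.
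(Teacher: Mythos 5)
Your proof is correct. The paper actually states this lemma without any proof at all --- it is the classical Sukhatme/R\'enyi representation of exponential order statistics, treated there as a known fact --- so there is no argument of the paper to compare against; your write-up supplies exactly what the paper omits. Your change-of-variables computation is the standard derivation and every step checks out: the joint density $n!\,e^{-\sum_i x_i}$ of the order statistics, the unit-Jacobian passage to the spacings via the identity $\sum_{i=1}^n x_i=\sum_{j=1}^n (n-j+1)s_j$, the diagonal rescaling $D_i=(n-i+1)s_i$ whose Jacobian factor $1/n!$ cancels the $n!$ from the order-statistic density, and the identification of the density $\tfrac12 e^{-t/2}$ of $T_i=2D_i$ with the $\chi^2$ density with two degrees of freedom. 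The alternative route you sketch at the end, using $\eta_{(1)}\sim\mathrm{Exp}(n)$ and memorylessness to run an induction on $n$, is equally valid and more probabilistic; either version would serve as a complete proof of Lemma 2.4, and both yield the consequences drawn in Remark 2.1 (the representation $\eta_{(n)}=\sum_{i=1}^n T_i/(2(n-i+1))$ and the resulting expectation and tail estimates).
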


\begin{myrem}
	(i) It follows from Lemma 2.4
	\begin{align}
	\eta_{(n)}=\sum_{i=1}^{n}\frac{T_{i}}{2(n-i+1)}.\nonumber
	\end{align}

Hence, we have
\begin{align}
	\mathbb{E}\eta_{(n)}=\mathbb{E}\sum_{i=1}^{n}\frac{T_{i}}{2(n-i+1)}=\sum_{i=1}^{n}\frac{1}{n-i+1}=\sum_{i=1}^{n}\frac{1}{i},\nonumber
\end{align}
which implies the expectation of $\eta_{(n)}$ is of order $\log n$.

Besides, it is readily seen by Lemma 2.3
\begin{align}
	\mathbb{P}\{\eta_{(n)}-\mathbb{E}\eta_{(n)}\ge t\}\le \exp(-C_{1}\min (t^{2}, t)),\nonumber
\end{align}
where $C_{1}$ is a universal constant and $t>0$.

At last, we have
\begin{align}
	\mathbb{E}e^{\lambda (\eta_{(n)}-\mathbb{E}\eta_{(n)})}=\prod_{i=1}^{n}\mathbb{E}\exp\left(\lambda \frac{T_{i}-2}{2(n-i+1)}\right)\le \exp\left(\lambda^{2}C^{2}\sum_{i=1}^{n}\frac{1}{i^{2}}\right),\nonumber
\end{align}
which means that $\Vert \eta_{(n)}-\mathbb{E}\eta_{(n)}\Vert_{\psi_{1}}\lesssim \sum_{i\le n}1/i^{2}$.

(ii)As mentioned above, one can represent $\eta_{n}$ as a sum of a sequence of independent variables. Note that
\begin{align}
	Var\Big(  \sum_{i=1}^{n}\frac{T_{i}}{2(n-i+1)}\Big)=\sum_{i=1}^{n}\frac{1}{i^{2}}\asymp \frac{\pi^{2}}{6}<\infty,\nonumber
\end{align}
not satisfying the condition of central limit theorem. Hence, the tail decay of $\eta_{n}-\mathbb{E}\eta_{n}$ is not $e^{-t^{2}}$.
\end{myrem}

Next, we shall give the following comparison theorem (Lemma 4.7 in \cite{inventions}).

\begin{mylem}
	Let $h_{i}$ and $h_{i}^{'}$, $i=1, \cdots, n$ be independent centered random variables such that
	\begin{align}
		(\mathbb{E}\vert h_{i}\vert^{p})^{1/p}\le c_{0}p^{\beta}, \qquad  c_{1}p^{\beta}	\le(\mathbb{E}\vert h_{i}^{'}\vert^{p})^{1/p}\le c_{2}p^{\beta}\nonumber
	\end{align}
	for all $p\ge 2$ and $i=1,\cdots, n$, where $c_{0}, c_{1}$ and $c_{2}$ are constants depending on random variables. Then there exists a constant $c$ depending only on $c_{0}, c_{1}, c_{2}$, and $ \beta$ such that
	\begin{align}
		\mathbb{E}f(h_{1},\cdots,h_{n})\le \mathbb{E}f(ch_{1}^{'},\cdots,ch_{n}^{'})\nonumber
	\end{align}
	for every symmetric convex function $f: \mathbb{R}^{n}\to \mathbb{R}$.
\end{mylem}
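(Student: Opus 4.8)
The plan is to reduce the comparison to a statement about \emph{symmetric} random variables and then upgrade the moment hypotheses into a genuine stochastic domination of absolute values, which feeds into any symmetric convex functional through a Rademacher averaging argument. Write $h_i^{s}=h_i-\bar h_i$ and $(h_i')^{s}=h_i'-\bar h_i'$ for the symmetrizations, where $\bar h_i,\bar h_i'$ are independent copies. Since the $h_i$ are centered and $f$ is convex, Jensen's inequality applied conditionally on $(h_1,\dots,h_n)$ gives $\mathbb{E}f(h_1,\dots,h_n)\le \mathbb{E}f(h_1^{s},\dots,h_n^{s})$, so it suffices to bound the symmetric side. Symmetrization inflates the upper moment bound by at most a factor $2$ and, because the $h_i'$ are centered, it preserves the lower moment bound (by Jensen, $\Vert (h_i')^{s}\Vert_p\ge \Vert h_i'\Vert_p$); hence $\Vert h_i^{s}\Vert_p\le 2c_0p^{\beta}$ and $c_1p^{\beta}\le \Vert (h_i')^{s}\Vert_p\le 2c_2p^{\beta}$ for all $p\ge 2$.

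The heart of the matter is the comparison between the two symmetric sequences $h^{s}$ and $(h')^{s}$. First I would convert moments into tails, exactly the moment–tail equivalence of Lemma 2.1 but with exponent $1/\beta$ in place of $1$. The upper bound $\Vert h_i^{s}\Vert_p\le 2c_0p^{\beta}$ yields, via Markov's inequality and optimization in $p$, a bound $\mathbb{P}\{\vert h_i^{s}\vert\ge t\}\le \exp(-\kappa_0 t^{1/\beta})$. For $(h')^{s}$ the two-sided moment bound gives matching two-sided tails: the upper tail as above, and a lower tail $\mathbb{P}\{\vert (h_i')^{s}\vert\ge t\}\ge c_3\exp(-\kappa_1 t^{1/\beta})$ obtained from the Paley--Zygmund inequality, where the regular growth $\Vert (h_i')^{s}\Vert_{2p}\lesssim \Vert (h_i')^{s}\Vert_p$ (a direct consequence of the two-sided hypothesis) keeps the ratio $\mathbb{E}\vert (h_i')^{s}\vert^{2p}/(\mathbb{E}\vert (h_i')^{s}\vert^{p})^{2}$ bounded from above. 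Comparing the two tail profiles, one can choose a constant $c$ depending only on $c_0,c_1,c_2,\beta$ so large that $\mathbb{P}\{\vert h_i^{s}\vert\ge t\}\le \mathbb{P}\{c\vert (h_i')^{s}\vert\ge t\}$ for every $t\ge 0$ and every $i$; that is, $\vert h_i^{s}\vert$ is stochastically dominated by $c\vert (h_i')^{s}\vert$.

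Given this domination, I would finish as follows. Writing $h_i^{s}=\varepsilon_i\vert h_i^{s}\vert$ with $\varepsilon_i$ independent Rademacher signs, the unconditionality and convexity of $f$ make $g(a):=\mathbb{E}_{\varepsilon}f(\varepsilon_1a_1,\dots,\varepsilon_na_n)$ a convex function that is nondecreasing in each $a_i\ge 0$, since $\tfrac12[f(\dots,a_i,\dots)+f(\dots,-a_i,\dots)]$ is even and convex in $a_i$. As the coordinates are independent and stochastically dominated, a coordinatewise monotone coupling gives $\mathbb{E}f(h^{s})=\mathbb{E}g(\vert h_1^{s}\vert,\dots)\le \mathbb{E}g(c\vert (h_1')^{s}\vert,\dots)=\mathbb{E}f(c(h')^{s})$. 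Finally I desymmetrize on the target side: by convexity and unconditionality of $f$ together with $\mathbb{E}\bar h_i'=0$, one has $\mathbb{E}f(c(h')^{s})\le \mathbb{E}f(2ch')$. Chaining the three inequalities and relabelling the constant $2c$ as $c$ proves the lemma.

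The main obstacle is the passage from moment domination to the uniform stochastic domination $\mathbb{P}\{\vert h_i^{s}\vert\ge t\}\le \mathbb{P}\{c\vert (h_i')^{s}\vert\ge t\}$ valid for \emph{all} $t\ge 0$, not merely for large $t$. The large-$t$ regime is governed by the exponents $t^{1/\beta}$ and is handled by taking $c$ with $\kappa_1 c^{-1/\beta}<\kappa_0$; the delicate part is the intermediate range, where the leading constant $c_3$ in the Paley--Zygmund lower bound must be absorbed. This is precisely where the lower moment bound $c_1p^{\beta}\le \Vert h_i'\Vert_p$ is indispensable: without it $h_i'$ could be too concentrated to dominate $h_i$, and the convex comparison would break down. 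Everything else is bookkeeping arranged so that all constants depend only on $c_0,c_1,c_2$ and $\beta$.
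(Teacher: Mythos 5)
The paper itself gives no proof of this lemma---it is quoted as Lemma 4.7 of \cite{inventions}---so your argument can only be compared with the proof given there, and it shares that proof's general philosophy (moments to tails to a convex comparison for symmetrized variables). However, it has a genuine gap at its central step: the claim that one can choose $c=c(c_0,c_1,c_2,\beta)$ so that $\mathbb{P}\{\vert h_i^{s}\vert\ge t\}\le \mathbb{P}\{c\vert (h_i')^{s}\vert\ge t\}$ for \emph{every} $t\ge 0$. This is false in general. Rescaling by $c$ shifts the tail profile of $(h_i')^{s}$ horizontally, which handles large $t$, but does nothing as $t\downarrow 0$: there $\mathbb{P}\{c\vert (h_i')^{s}\vert\ge t\}$ is capped by $\mathbb{P}\{(h_i')^{s}\neq 0\}$, and the moment hypotheses do not force this probability to be $1$. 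Concretely, take $\beta=1$, let $h_i$ be standard Laplace, and let $h_i'=B_iL_i$ with $B_i$ Bernoulli$(1/2)$ independent of a standard Laplace $L_i$. Then $(\mathbb{E}\vert h_i'\vert^{p})^{1/p}=2^{-1/p}(\Gamma(p+1))^{1/p}\asymp p$, so both moment hypotheses hold, yet $\mathbb{P}\{(h_i')^{s}=0\}\ge 1/4$, whence $\mathbb{P}\{c\vert (h_i')^{s}\vert\ge t\}\le 3/4$ for all $t>0$ and all $c$, while $\mathbb{P}\{\vert h_i^{s}\vert\ge t\}\to 1$ as $t\downarrow 0$ because $h_i^{s}$ has a continuous law. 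So the Paley--Zygmund constant $c_3$ genuinely cannot be absorbed: the lower moment bound guarantees mass of $\vert h_i'\vert$ at a fixed positive scale, not that $h_i'$ is almost surely nonzero, and the monotone coupling you invoke therefore need not exist. You correctly flagged this range as the delicate point, but the step does not merely require bookkeeping---as stated it fails.

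The repair, which is in essence how the cited proof handles this, is to demand tail domination only above a fixed level $t_0$ (your Markov/Paley--Zygmund computation does deliver this) and then truncate: write $h_i^{s}=u_i+v_i$ with $u_i=h_i^{s}\mathbb{I}_{\{\vert h_i^{s}\vert\ge t_0\}}$ and $v_i=h_i^{s}\mathbb{I}_{\{\vert h_i^{s}\vert< t_0\}}$, and use $f(u+v)\le \frac{1}{2}f(2u)+\frac{1}{2}f(2v)$. The variables $u_i$ are symmetric and \emph{are} stochastically dominated by $c\vert (h_i')^{s}\vert$ at every level $t>0$ (for $t<t_0$ their tail equals their tail at $t_0$, which is already dominated), so your coupling-plus-Rademacher-averaging argument applies verbatim to $\mathbb{E}f(2u)$. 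For the bounded part, convexity gives $\mathbb{E}f(2v)\le \mathbb{E}_{\varepsilon}f(2t_0\varepsilon_1,\cdots,2t_0\varepsilon_n)$, since the convex unconditional function $a\mapsto \mathbb{E}_{\varepsilon}f(\varepsilon_1 a_1,\cdots,\varepsilon_n a_n)$ attains its supremum over the cube $[-2t_0,2t_0]^{n}$ at a vertex; this Rademacher term is in turn at most $\mathbb{E}f(c'(h')^{s})$, by conditioning on the signs of the $(h_i')^{s}$, applying Jensen in the moduli, and using that H\"older's inequality with the two-sided moment bounds yields $\mathbb{E}\vert (h_i')^{s}\vert\ge \Vert (h_i')^{s}\Vert_{2}^{3}/\Vert (h_i')^{s}\Vert_{4}^{2}\ge m>0$ with $m$ depending only on $c_1,c_2,\beta$. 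Combining the two terms via the monotonicity of $s\mapsto \mathbb{E}f(s(h')^{s})$ for $s\ge 0$ completes the proof; your symmetrization of the $h_i$ at the start and desymmetrization of $(h_i')^{s}$ at the end are correct and can be kept unchanged.
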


\begin{mylem}[Sub-Exponential maxima]
	Suppose that $\xi_{1}, \cdots, \xi_{n}$ are independent Exponential variables with parameter $1$. Let $\alpha_{1},  \cdots,  \alpha_{n}>0$ be a sequence of constants. Then we have
	\begin{align}
		\mathbb{E} \max_{i\le n}\vert \alpha_{i}\xi_{i}\vert\asymp \max_{i\le n}\alpha_{i}^{*}\log(i+1),\nonumber
	\end{align}
where $\alpha_{1}^{*}\ge \cdots\ge  \alpha_{n}^{*}$ is the decreasing rearrangement of $\alpha_{1},  \cdots,  \alpha_{n}$.
\end{mylem}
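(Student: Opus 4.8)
The plan is to establish the two matching bounds $\mathbb{E}\max_{i}\alpha_{i}\xi_{i}\lesssim M$ and $\mathbb{E}\max_{i}\alpha_{i}\xi_{i}\gtrsim M$ separately, where I abbreviate $M:=\max_{i\le n}\alpha_{i}^{*}\log(i+1)$. Since $\xi_{1},\dots,\xi_{n}$ are i.i.d., the law of $\max_{i}\alpha_{i}\xi_{i}$ is invariant under permuting the indices, so I may relabel and assume $\alpha_{1}\ge\cdots\ge\alpha_{n}$, i.e. $\alpha_{i}=\alpha_{i}^{*}$; moreover $\lvert\alpha_{i}\xi_{i}\rvert=\alpha_{i}\xi_{i}$ because $\alpha_{i}>0$ and $\xi_{i}\ge 0$ almost surely.

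For the lower bound I would keep only the $k$ largest weights. Fixing $k\le n$ and using $\alpha_{i}\ge\alpha_{k}$ for $i\le k$,
\[
\max_{i\le n}\alpha_{i}\xi_{i}\ge\max_{i\le k}\alpha_{i}\xi_{i}\ge\alpha_{k}\max_{i\le k}\xi_{i}.
\]
The R\'enyi representation recorded in Remark 2.1 (via Lemma 2.4) gives $\mathbb{E}\max_{i\le k}\xi_{i}=\sum_{j=1}^{k}1/j\asymp\log(k+1)$, whence $\mathbb{E}\max_{i}\alpha_{i}\xi_{i}\gtrsim\alpha_{k}\log(k+1)$. Taking the maximum over $k$ then yields $\mathbb{E}\max_{i}\alpha_{i}\xi_{i}\gtrsim M$.

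For the upper bound I would integrate the tail and apply a union bound: for every $t\ge 0$,
\[
\mathbb{P}\Big\{\max_{i}\alpha_{i}\xi_{i}>t\Big\}\le\sum_{i=1}^{n}\mathbb{P}\{\xi_{i}>t/\alpha_{i}\}=\sum_{i=1}^{n}e^{-t/\alpha_{i}}.
\]
The definition of $M$ forces $\alpha_{i}\le M/\log(i+1)$, hence $e^{-t/\alpha_{i}}\le(i+1)^{-t/M}$. Writing $\mathbb{E}\max_{i}\alpha_{i}\xi_{i}=\int_{0}^{\infty}\mathbb{P}\{\max_{i}\alpha_{i}\xi_{i}>t\}\,dt$ and splitting at $t=2M$, the integral over $[0,2M]$ is at most $2M$, while on $[2M,\infty)$ the substitution $t=sM$ followed by interchanging sum and integral produces $M\sum_{i=1}^{n}\int_{2}^{\infty}(i+1)^{-s}\,ds=M\sum_{i=1}^{n}(i+1)^{-2}/\log(i+1)$, a convergent series bounded by a universal constant. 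This gives $\mathbb{E}\max_{i}\alpha_{i}\xi_{i}\lesssim M$ and closes the argument.

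The normalization and the lower bound are routine; the only delicate point is the tail integral in the upper bound. The union bound $\sum_{i}(i+1)^{-t/M}$ is summable in $i$ only once $t$ exceeds a fixed multiple of $M$, so the crux is to split the integral at a point proportional to $M$ and then verify that $\sum_{i}(i+1)^{-2}/\log(i+1)$ converges. It is precisely this step that collapses the whole estimate back onto the single scalar $M$ and makes the upper bound match the lower bound up to universal constants.
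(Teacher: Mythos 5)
Your proof is correct, but the upper bound follows a genuinely different route from the paper's. The paper bounds $\mathbb{E}\max_{i}\alpha_{i}\xi_{i}$ by comparison with products of independent Gaussians: it observes that $(\mathbb{E}\vert g_{i}g_{i}'\vert^{p})^{1/p}\asymp(\mathbb{E}\xi_{i}^{p})^{1/p}$, invokes the moment-comparison theorem (Lemma 2.5, from Latala--van Handel--Youssef) to replace $\xi_{i}$ by $g_{i}g_{i}'$, and then controls $\mathbb{E}\max_{i}\vert\alpha_{i}g_{i}g_{i}'\vert$ by conditioning and applying a known Gaussian-maxima estimate (Lemma 4.5 of van Handel's \emph{Structured Random Matrices}) twice. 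You instead argue directly: permutation invariance to reduce to $\alpha_{i}=\alpha_{i}^{*}$, the union bound $\mathbb{P}\{\max_{i}\alpha_{i}\xi_{i}>t\}\le\sum_{i}(i+1)^{-t/M}$ exploiting $\alpha_{i}\le M/\log(i+1)$, and tail integration split at $t=2M$, with the convergent series $\sum_{i}(i+1)^{-2}/\log(i+1)$ closing the estimate (all steps check out, including the exact evaluation $\int_{2}^{\infty}(i+1)^{-s}\,ds=(i+1)^{-2}/\log(i+1)$). Your lower bound coincides with the paper's: restrict to the $k$ largest weights and use $\mathbb{E}\max_{i\le k}\xi_{i}\asymp\log(k+1)$ from the R\'enyi representation of Remark 2.1. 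What each approach buys: yours is elementary and self-contained (no comparison lemma, no external Gaussian results), and as a by-product yields an explicit tail bound for $\max_{i}\alpha_{i}\xi_{i}$ rather than only its expectation; the paper's route, while heavier, is the one that scales to the later extensions --- the same Gaussian-product/comparison mechanism is iterated in Section 5 (Lemma 5.2) to handle $\psi_{\alpha}$-tails for all $0<\alpha\le 2$, where your union-bound computation would need to be redone with $\alpha$-dependent tails, whereas the lower bound via two-sided moment comparison is less immediate from a pure union-bound viewpoint.
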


\begin{proof}
	Let $g_{1}, \cdots, g_{n}$ be a sequence of independent standard Gaussian variables. It follows from Lemma 4.5 of \cite{structure}
\begin{align}
	\mathbb{E} \max_{i\le n}\vert \sqrt{\alpha_{i}}g_{i}\vert\lesssim \max_{i\le n}\sqrt{\alpha_{i}\log(i+1)}.\nonumber
\end{align}

Let $g_{1}^{'}, \cdots, g_{n}^{'}$ be another sequence of independent standard Gaussian variables. Then,
\begin{align}
	\mathbb{E}\max_{i\le n}\vert \alpha_{i}g_{i}g_{i}^{'}\vert&=\mathbb{E}\bigg(\mathbb{E}\Big(\max_{i\le n}\vert \alpha_{i}g_{i}g_{i}^{'}\vert\big| (g_{1},\cdots ,g_{n})\Big)\bigg)\nonumber\\
	&\lesssim \mathbb{E}\max_{i\le n}\vert \alpha_{i}g_{i}\vert\sqrt{\log (i+1)}\nonumber\\
	&\lesssim \max_{i\le n}\alpha_{i}\log(i+1)\nonumber.
\end{align}

 Hence, we have by permutation invariance
\begin{align}
	\mathbb{E}\max_{i\le n}\vert \alpha_{i}g_{i}g_{i}^{'}\vert\lesssim \max_{i\le n}\alpha_{i}^{*}\log(i+1)\nonumber.
\end{align}

Note that for $p\ge 1$,
\begin{align}
	\big(\mathbb{E}\vert g_{i}g_{i}^{'}\vert^{p}\big)^{1/p}\asymp\big(\mathbb{E}\xi_{i}^{p}\big)^{1/p}.
\end{align}
It follows from Lemma 2.5
\begin{align}
		\mathbb{E} \max_{i\le n} \alpha_{i}\xi_{i}\lesssim\mathbb{E}\max_{i\le n}\vert \alpha_{i}g_{i}g_{i}^{'}\vert\lesssim \max_{i\le n}\alpha_{i}^{*}\log(i+1)\nonumber.
\end{align}

Next, we shall prove the inequality in the opposite direction. For a fixed $j\le n$, we have
\begin{align}
	\mathbb{E}\max_{i\le j} \alpha_{i}\xi_{i}=\mathbb{E}\max_{i\le j} \alpha_{i}^{*}\xi_{i}\ge\alpha_{j}^{*} \mathbb{E}\max_{i\le j}\xi_{j}\gtrsim\alpha_{j}^{*}\log(j+1)\nonumber,
\end{align}
where we used that the expectation of the maximum of $j$ i.i.d. Exponential variables with parameters $1$ is of order $\log (j+1)$ (Remark 2.1). It remains to take the maximum over $j\le n$.

\end{proof}

\section{Sub-Exponential Diagonal Matrix }

Let $Y$ be an $n\times n$ diagonal random matrix. Then, denote its entries by $Y_{1},\cdots, Y_{n}$, which are independent centered sub-Exponential random variables. In this section, we prove the following results for this remarkable example.

\begin{mytheo}
	The spectral norm of $Y$ satisfies
	\begin{align}
		\mathbb{E}\Vert Y\Vert \lesssim \max_{i\le n}\Vert Y_{i}\Vert_{\psi_{1}}^{*}\log(i+1)\nonumber
	\end{align}	
	and the following concentration inequality
	\begin{align}
		\mathbb{P}\{\Vert Y\Vert-C\max_{i}\Vert Y_{i}\Vert_{\psi_{1}}\log n>t\}\le \exp\left(-C_{1}\min(\frac{t}{\max_{i}\Vert Y_{i}\Vert_{\psi_{1}}}, \frac{t^{2}}{\max_{i}\Vert Y_{i}\Vert_{\psi_{1}}^{2}})\right),\nonumber
	\end{align}
where $\{\Vert Y_{i}\Vert_{\psi_{1}}^{*}\}$ is the descreasing rearrangement of $\{\Vert Y_{i}\Vert_{\psi_{1}}\}$, $t\ge 0$ and $C,  C_{1}$ are universal constants.
\end{mytheo}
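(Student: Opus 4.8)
The plan is to exploit the fact that for a diagonal matrix the spectral norm collapses to a scalar maximum, namely $\Vert Y\Vert=\max_{i\le n}\vert Y_{i}\vert$, so that both assertions reduce to statements about the maximum of $n$ independent centered sub-Exponential scalars. The expectation bound will follow from the comparison theorem (Lemma 2.5) together with the sub-Exponential maxima estimate (Lemma 2.6), while the concentration inequality will be obtained by a direct union bound over the diagonal entries.

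For the expectation, I would first record from the sub-Exponential property (item 2 of Lemma 2.1) that the normalized variables $h_{i}:=Y_{i}/\Vert Y_{i}\Vert_{\psi_{1}}$ satisfy $(\mathbb{E}\vert h_{i}\vert^{p})^{1/p}\le c_{0}p$ uniformly in $i$, with $\beta=1$. Let $g_{i},g_{i}'$ be independent standard Gaussians; the products $g_{i}g_{i}'$ are centered and obey $c_{1}p\le(\mathbb{E}\vert g_{i}g_{i}'\vert^{p})^{1/p}\le c_{2}p$, the moment identity already used in the proof of Lemma 2.6. Since the function $f(x_{1},\dots,x_{n})=\max_{i\le n}\Vert Y_{i}\Vert_{\psi_{1}}\vert x_{i}\vert$ is convex and invariant under sign changes of the coordinates, Lemma 2.5 applies with $h_{i}$ and $h_{i}'=g_{i}g_{i}'$ and yields, using the positive homogeneity of $f$,
\begin{align}
\mathbb{E}\Vert Y\Vert=\mathbb{E}\max_{i\le n}\Vert Y_{i}\Vert_{\psi_{1}}\vert h_{i}\vert\le c\,\mathbb{E}\max_{i\le n}\Vert Y_{i}\Vert_{\psi_{1}}\vert g_{i}g_{i}'\vert.\nonumber
\end{align}
The right-hand side is exactly the quantity controlled in the course of proving Lemma 2.6 (with $\alpha_{i}=\Vert Y_{i}\Vert_{\psi_{1}}$), so it is $\lesssim\max_{i\le n}\Vert Y_{i}\Vert_{\psi_{1}}^{*}\log(i+1)$, which is the desired bound.

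For the concentration inequality, write $K:=\max_{i\le n}\Vert Y_{i}\Vert_{\psi_{1}}$. By the tail form of the sub-Exponential property (item 1 of Lemma 2.1) each entry obeys $\mathbb{P}\{\vert Y_{i}\vert>s\}\le 2\exp(-s/(cK))$ for a universal $c$, uniformly in $i$. A union bound then gives, for $s=CK\log n+t$,
\begin{align}
\mathbb{P}\{\Vert Y\Vert>CK\log n+t\}\le\sum_{i=1}^{n}\mathbb{P}\{\vert Y_{i}\vert>s\}\le 2n^{\,1-C/c}\exp(-t/(cK)).\nonumber
\end{align}
Choosing the universal constant $C$ large enough that $C/c\ge 2$ forces $2n^{1-C/c}\le 1$ for all $n\ge 2$, whence $\mathbb{P}\{\Vert Y\Vert-CK\log n>t\}\le\exp(-t/(cK))$. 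Since $t/(cK)\ge\tfrac{1}{c}\min(t/K,t^{2}/K^{2})$ for every $t\ge 0$, this pure-exponential tail already implies the stated Bernstein-type bound with $C_{1}=1/c$.

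The routine parts are the two invocations of Lemma 2.1 and the bookkeeping in the union bound. The one genuinely delicate point is the application of the comparison theorem: one must (i) normalize by $\Vert Y_{i}\Vert_{\psi_{1}}$ so that the moment-growth constants are uniform in $i$ as Lemma 2.5 demands, pushing the weights into the unconditional convex function $f$, and (ii) compare against the centered products $g_{i}g_{i}'$ rather than against the exponential variables $\xi_{i}$ directly, so as not to violate the centering hypothesis of Lemma 2.5 while still landing on a quantity governed by Lemma 2.6. Getting these two reductions right is where essentially all the content of the expectation bound lies; once they are in place, both halves of the theorem follow mechanically.
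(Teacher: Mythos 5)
Your proposal is correct, and while your expectation bound follows essentially the paper's own route, your concentration argument takes a genuinely different and more elementary path. For the expectation, the paper does what you do (Lemma 2.5 combined with the estimate inside Lemma 2.6), except that it compares the $Y_i$ directly against exponential variables $\eta_i$, which are not centered; your substitution of the centered products $g_ig_i'$ respects the hypotheses of Lemma 2.5 more scrupulously and costs nothing, since $\mathbb{E}\max_{i\le n}\vert \alpha_i g_ig_i'\vert\lesssim\max_{i\le n}\alpha_i^{*}\log(i+1)$ is exactly the intermediate step of the proof of Lemma 2.6. For the tail, the paper runs a Chernoff/MGF argument: it transfers $\mathbb{E}\exp(\lambda\max_i\vert Y_i\vert)$ to $\mathbb{E}\exp\left(\lambda C_0 K\max_i\eta_i\right)$ via Lemma 2.5 applied to the convex function $x\mapsto\exp(\lambda\max_i\vert\beta_i x_i\vert)$, then uses the R\'enyi representation of Lemma 2.4 and Remark 2.1 to write $\max_i\eta_i-\mathbb{E}\max_i\eta_i$ as a weighted sum of independent centered chi-squares, bounds its MGF by $\exp(C_4\lambda^2K^2)$ for $\lambda\lesssim 1/K$, and optimizes over $\lambda$; that optimization is where the two-regime form $\min(t/K,t^2/K^2)$ originates. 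Your union bound bypasses all of this: after the shift $CK\log n$, the $n$ individual sub-exponential tails sum to $2n^{1-C/c}e^{-t/(cK)}\le e^{-t/(cK)}$ for $n\ge 2$, and since $\min(t/K,t^2/K^2)\le t/K$ this pure-exponential tail formally implies the stated Bernstein-type bound. Your route is shorter, needs no order statistics, and in fact yields a slightly stronger conclusion; moreover, the downstream use of Theorem 3.1 in Section 4 (inequality (4.4)) only invokes the exponential regime $e^{-C_4t/\sigma_1}$, so nothing is lost there either. What the paper's heavier MGF route buys is mainly a template: it is the argument that is recycled almost verbatim for the $\psi_\alpha$ diagonal case in Theorem 5.1. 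Two caveats, both shared equally by the paper's own proof and hence not gaps attributable to you: the statement implicitly requires $n\ge 2$ (for $n=1$ the shift $CK\log n$ vanishes and no such bound can hold, e.g.\ for a symmetric two-point $Y_1$), and ``symmetric convex'' in Lemma 2.5 must be read as sign-invariant rather than permutation-invariant, since your test function $f(x)=\max_i\Vert Y_i\Vert_{\psi_1}\vert x_i\vert$ --- like the weighted maxima the paper itself plugs into Lemma 2.5 --- is not invariant under permutations of the coordinates.
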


%\begin{myrem}
%	Suppose that for some constants $0<c, c_{1}<1$,
%	\begin{align}\label{rem}
%		\Vert Y_{\lceil n^{c}\rceil}\Vert_{\psi_{1}}^{*}\ge c_{1}\max_{i}\Vert Y_{i}\Vert_{\psi_{1}},
%	\end{align}
%then we have
%\begin{align}
%	\max_{i\le n}\Vert Y_{i}\Vert_{\psi_{1}}^{*}\log(i+1)\asymp \max_{i}\Vert Y_{i}\Vert_{\psi_{1}}\log n,\nonumber
%\end{align}
%which is due to that
%\begin{align}
%	\max_{i\le n}\Vert Y_{i}\Vert_{\psi_{1}}^{*}\log(i+1)\ge \Vert Y_{\lceil n^{c}\rceil}\Vert_{\psi_{1}}^{*}\log(n^{c}+1)\ge cc_{1}\max_{i}\Vert Y_{i}\Vert_{\psi_{1}}\log n .\nonumber
%\end{align}
%This means that our tail bound in Theorem 3.1 is sharp in the assumption (\ref{rem}).
%\end{myrem}

\begin{proof}
	Let $\eta_{i}, i=1,\cdots, n$ be a sequence of random variables satisfying exponential distribution with parameter $1$. Then we have by Stirling Formular
	\begin{align}
		(\mathbb{E}\vert \eta_{i}\vert^{p})^{1/p}=(p!)^{1/p}\asymp p.\nonumber
	\end{align}

It follows from sub-Exponential properties
\begin{align}
	(\mathbb{E}\vert Y_{i}\vert^{p})^{1/p}\lesssim \beta_{i} p,\nonumber
\end{align}
where $\beta_{i}=\Vert Y_{i}\Vert_{\psi_{1}}$.

Note that $\Vert Y\Vert=\max_{i} \vert Y_{i}\vert$. By virtue of Lemma 2.5, we have
\begin{align}
	\mathbb{E}\Vert Y\Vert\lesssim \mathbb{E}\max_{i\le n}\beta_{i}\eta_{i}\asymp\max_{i\le n}\beta_{i}^{*}\log(i+1).\nonumber
\end{align}

By Markov's inequality, we have for $\lambda, t>0$
\begin{align}\label{3.1}
	&\mathbb{P}\{\max_{i}\vert Y_{i}\vert-C_{0}(\max_{i}\beta_{i})\mathbb{E}\max_{i}\eta_{i}>t\}\nonumber\\
	\le &e^{-\lambda t}\mathbb{E}\exp\left(\lambda \big(\max_{i}\vert Y_{i}\vert-C_{0}(\max_{i}\beta_{i})\mathbb{E}\max_{i}\eta_{i}\big)\right).
\end{align}

Using Lemma 2.5 again, we have
\begin{align}
	\mathbb{E}\exp(\lambda\max_{i}\vert Y_{i}\vert)\le \mathbb{E}\exp(\lambda C_{0}\max_{i}\beta_{i}\eta_{i})\le \mathbb{E}\exp \left(\lambda C_{0}(\max_{i}\beta_{i})\max_{i}\eta_{i}\right)\nonumber.
\end{align}

Recalling Remark 2.1, we have
\begin{align}
	\mathbb{E}\exp\big(\lambda C_{0} \max_{i}\beta_{i}(\max_{i}\eta_{i}-\mathbb{E}\max_{i}\eta_{i})\big)=\prod_{i\le n}\mathbb{E}\exp\left(\lambda C_{0} \max_{i}\beta_{i}\big(\frac{T_{i}-2}{2(n-i+1)}\big)\right),\nonumber
\end{align}
where $T_{1}, \cdots, T_{n}$ is a sequence of i.i.d. chi-square random variables with parameters $2$.

By virtue of Lemma 2.1, we have for $\lambda\le C_{2}/\max_{i}\beta_{i}$
\begin{align}
	\mathbb{E}\exp\left(\lambda C_{0}\max_{i}\beta_{i}\big(\frac{T_{i}-2}{2(n-i+1)}\big)\right)\le  \exp\left(\frac{C_{3}\lambda^{2}\max_{i}\beta_{i}^{2}}{(n-i+1)^{2}}\right).\nonumber
\end{align}

Then, (\ref{3.1}) is further bounded by
\begin{align}
	\exp(-\lambda t+C_{4}\lambda^{2}\max_{i}\beta_{i}^{2}),
\end{align}
where $C_{4}$ is a universal constant and $\lambda\le C_{2}/\max_{i}\beta_{i}$. To optimize this bound, let
\begin{align}
	\lambda=\min(\frac{C_{2}}{\max_{i}\beta_{i}}, \frac{t}{2C_{4}\max_{i}\beta_{i}^{2}}).\nonumber
\end{align}

Hence, we have
\begin{align}
	\mathbb{P}\big\{\max_{i}\vert Y_{i}\vert-C_{0}(\max_{i}\beta_{i})\mathbb{E}\max_{i}\eta_{i}>t\big\}\le \exp\left(-C_{1}\min\big(\frac{t}{\max_{i}\beta_{i}}, \frac{t^{2}}{\max_{i}\beta_{i}^{2}}\big)\right).\nonumber
\end{align}

The desired result follows from the expectation of $\max_{i}\eta_{i}$ is of order $\log n$.

\end{proof}

\section{Symmetric Sub-Exponential Random Matrix}

In this section, we shall prove our main result. First, we decompose an arbitrary point on the unit sphere $z$ into $l(n)$ parts. Then we find a set $\mathcal{M}$ satisfying the contraction inequality (\ref{contraction}). To control the supremum of $\langle Xx, x\rangle$ on the $\mathcal{M}$, we decompose $\langle Xx, x\rangle$ into two parts. One of them can be controlled by Theorem 3.1. For the other one, we get a tight bound for a fixed $x$ and then take a union bound over all $x$ in $\mathcal{M}$ to get the desired result.

Recall the definitions $b=\max_{i}\Vert X_{ii}\Vert_{\psi_{1}}\log n$ , $\sigma_{1}=\max_{i, j\le n}\Vert X_{ij}\Vert_{\psi_{1}}$ and $\sigma_{2}=\max_{i\neq j} \Vert X_{ij}\Vert_{\psi_{1}}$.
\begin{proof}[The Proof of Theorem 1.1]
	The spectral norm of symmetric matrix $X$ can be written as follows
	\begin{align}
		\Vert X\Vert=\sup_{z\in S^{n-1}}\langle Xz, z\rangle.\nonumber
	\end{align}
	We first give an index set $\mathcal{M}\subseteq S^{n-1}$ such that $\Vert X\Vert$ can be controlled by the maximum of the stochastic process $\{\langle Xx, x\rangle\}_{x\in \mathcal{M}}$.

	Pick an arbitrary $z\in S^{n-1}$. Let $l_{1}, l_{2},\cdots, l_{n}$ be such that $\vert z(l_{1})\vert\ge \vert z(l_{2})\vert\ge \cdots \ge \vert z(l_{n})\vert $.
	Define $l$ as the smallest integer such that
	\begin{align}\label{condition}
		\frac{n}{4^{l}}\log 96e\cdot4^{l}\le \sqrt{n}.
	\end{align}

We set
\begin{align}
	E_{0}=\{l(i)\}_{1\le i\le n/4^{l}}\nonumber
\end{align}
and
\begin{align}
	E_{1}=\{l(i) \}_{n/4<i\le n}, E_{2}=\{l(i) \}_{n/16<i\le n/4},\cdots, E_{l}=\{l(i) \}_{n/4^{l}<i\le n/4^{l-1}}.\nonumber
\end{align}
We now decompose $z$ into $\sum_{k=0}^{l}P_{E_{k}}z$, where $P_{E_{k}}$ is an orthogonal projection from $\mathbb{R}^{n}$ to $\mathbb{R}^{E_{k}}$ defined as before. 	

Due to $z\in S^{n-1}$, we observe that for every $k\ge 1$
\begin{align}
	\Vert P_{E_{k}}z\Vert_{\infty}\le \vert z(l_{s})\vert\le \sqrt{\frac{4^{k}}{n}},\nonumber
\end{align}
where $s=[n/4^{k}]$. Then for every $k\ge 1$, the vector $P_{E_{k}}z$ can be approximated by a vector from $\mathcal{N}(E_{k}, 4^{-k}, \sqrt{\frac{4^{k}}{n}})$ and the vector $P_{E_{0}}z$ can be approximated by a vector from $\mathcal{N}(E_{0}, 1/16, 1)$.

Hence, we set $\mathcal{M}=\mathcal{M}_{0}\cap 2B_{2}^{n}$. $\mathcal{M}_{0}$ consists of all vectors of the form $x=\sum_{k=0}^{l}x_{k}$, where the $x_{i}$ have disjoint supports and
\begin{align}
	x_{0}\in \bigcup_{\substack{E\subset \{1,\cdots, n\}\\ \vert E\vert\le a_{0}}}\mathcal{N}(E, 1/16, 1),\quad x_{k}\in \bigcup_{\substack{E\subset \{1,\cdots, n\}\\ \vert E\vert\le a_{k}}}\mathcal{N}(E, 4^{-k}, \sqrt{\frac{4^{k}}{n}}),\quad 1\le k\le l ,\nonumber
\end{align}
where
$
	a_{0}:=\vert E_{0}\vert\le n/4^{l}
$
and
$
	a_{k}:=\vert E_{k}\vert\le n/4^{k-1}
$ for $1\le k\le l$
satisfying $\sum_{k=0}^{l}a_{k}=n$.

For an arbitrary $z\in S^{n-1}$, there exists $x\in \mathcal{M}$, with a suitable representation $x=\sum_{k=0}^{l}x_{k}$, such that
\begin{align}
	\langle Xz, z \rangle&\le \langle Xx, x \rangle +\langle X(z-x), x \rangle+ \langle Xz, (z-x) \rangle\nonumber\\
	&\le \langle Xx, x \rangle +\langle X(z-x), (z+x) \rangle \nonumber\\
	&\le \langle Xx, x \rangle +\Vert X\Vert \cdot\vert z-x\vert \cdot\vert z+x\vert.\nonumber
\end{align}

Note that $\vert z+x\vert\le 3$ and
\begin{align}
	\vert (z-x)\vert=  (\sum_{k=0}^{l}\vert x_{k}-P_{E_{k}}z\vert^{2})^{1/2}\le 0.3.\nonumber
\end{align}

Thus we get the following significant contraction inequality
\begin{align}\label{contraction}
	\Vert X\Vert\le 10\sup_{x\in \mathcal{M}} \langle Xx, x \rangle.
\end{align}

Next, we turn to estimate $\sup_{x\in \mathcal{M}} \langle Xx, x \rangle$. Fix $x\in\mathcal{M}$ of the form $x=\sum_{k=0}^{l}x_{k}$ and let $F_{k}$ be the support of $x_{k}$. Denote the coordinates of $x$ by $x(i), i\le n$. Then
\begin{align}\label{16}
	\langle Xx, x \rangle=\sum_{i=1}^{n}\sum_{j=1}^{n}x(i)x(j)X_{ij}=\sum_{i=1}^{n}x(i)^{2} X_{ii}+D_{x},
\end{align}
where $D_{x}=\sum_{i\neq j}x(i)x(j)X_{ij}$.

Note that $\vert\sum_{i=1}^{n}x(i)^{2} X_{ii}\vert \le 4\max_{i}\vert X_{ii}\vert$. Hence, we have by virtue of Theorem 3.1
\begin{align}\label{end1}
	\mathbb{P}\{\sup_{x\in \mathcal{M}}\sum_{i=1}^{n}x(i)^{2}X_{ii}\ge C_{2}b+t\}\le C_{3}e^{-C_{4}t/\sigma_{1}},
\end{align}
where $C_{2}, C_{3}$ and $C_{4}$ are universal constants.

 If we get a similar estimate for $D_{x}$, we shall get the desired result by the union bound. To this aim, we split $D_{x}$ according to the structure of $x$. Define
\begin{align}
	D_{x}^{'}:=\sum_{k=0}^{l}\sum_{\substack{i, j\in F_{k}\\ i\neq j}}x(i)x(j)X_{ij}\nonumber
\end{align}
and
\begin{align}
	D_{x}^{''}:=\sum_{k=0}^{l}\sum_{\substack{i\in F_{k}\\ j\notin F_{k}}}x(i)x(j)X_{ij}=2\sum_{k=1}^{l}\sum_{i\in F_{k}}x(i)\sum_{r\in G_{k}} \sum_{j\in F_{r}}x(j)X_{ij},\nonumber
\end{align}
where $G_{k}=\{0,k+1,k+2,\cdots, l\}.$ Note that $D_{x}=D_{x}^{'}+D_{x}^{''}$.

We first estimate $D_{x}^{'}$.  For every $k$, we have
\begin{align}
	2^{|F_{k}|-2}\sum_{\substack{i, j\in F_{k}\\ i\neq j}}x(i)x(j)X_{ij}&=\sum_{E\subseteq F_{k}}\sum_{i\in E}\sum_{j\in E^{c}}x(i)x(j)X_{ij}\nonumber\\
	&\le 2^{|F_{k}|}\max_{E\subseteq F_{k}}\sum_{i\in E}\sum_{j\in E^{c}}x(i)x(j)X_{ij}\nonumber,
\end{align}
which implies that there exists  $F_{k}^{'}$, the subsets of $F_{k}$, such that
\begin{align}\label{D_{x}}
	D_{x}^{'}\le& 4\sum_{k=0}^{l}\sum_{i\in F_{k}^{'}}\sum_{j\in F_{k}\backslash  F_{k}^{'}}x(i)x(j)X_{ij}\nonumber\\
	\le &4\sup_{\substack{F\subset \{1,\cdots, n\}\\ \vert F\vert\le a_{0}}}\sup_{E\subset F}\sup_{v\in \mathcal{N}(F,1/16,1)}\sum_{i\in E}\sum_{j\in F\backslash E}v(i)v(j)X_{ij}\nonumber\\
	+& 4\sum_{k=1}^{l}\sup_{\substack{F\subset \{1,\cdots, n\}\\ \vert F\vert\le a_{k}}}\sup_{E\subset F}\sup_{v\in \mathcal{N}(F,4^{-k},\sqrt{4^{k}/n})}\sum_{i\in E}\sum_{j\in F\backslash E}v(i)v(j)X_{ij}.
\end{align}

For $1\le k\le l$, let $L_{k}=\frac{8n}{4^{k}}t\log (6e\cdot4^{2k})$ for $t\ge 1$. For $F\subseteq \{1,\cdots, n\}$ with $\vert F\vert\le a_{k}$, $E\subseteq F$, and $v\in \mathcal{N}(F, 4^{-k}, \sqrt{4^{k}/n} )$, we define the following set
\begin{align}
	\Omega(F, E, v):=\biggl\{ \sum\limits_{i\in E}\sum\limits_{j\in F\backslash E}v(i)v(j)X_{ij} > \sigma_{2}\sqrt{4^{k}/n} L_{k}\biggr\}.\nonumber
\end{align}

Note that $\Vert v\Vert_{\infty}\le \sqrt{4^{k}/n}\le 1$. We have
\begin{align}
		\sum\limits_{i\in E} \sum\limits_{j\in F\backslash E}\vert v(i)v(j)X_{ij}\vert\le \sqrt{4^{k}/n}  \sum\limits_{i\in E}\sum\limits_{j\in F\backslash E}\vert v(i)X_{ij}\vert,\nonumber
\end{align}
which implies that
\begin{align}\label{12}
	\mathbb{P}\{\Omega(F, E, v)\}&\le \mathbb{P}\{\sum\limits_{i\in E}\sum\limits_{j\in F\backslash E}\vert v(i)X_{ij}\vert>\sigma_{2}L_{k}\}\nonumber\\
	&\le e^{-L_{k}}\mathbb{E}\exp\big(\sum_{i\in E}\sum\limits_{j\in F\backslash E}\frac{\vert v(i)X_{ij}\vert}{\sigma_{2}}\big).
\end{align}

 Due to the sub-Exponential properties, we have
\begin{align}
	\mathbb{E}\exp\big(\sum_{i\in E}\sum_{j\in F\backslash E}\frac{\vert v(i)X_{ij}\vert}{\sigma_{2}}\big)&=\prod_{i\in E}\prod_{j\in F\backslash E}\mathbb{E}\exp (\frac{\vert v(i)X_{ij}\vert}{\sigma_{2}})\nonumber\\
	&\le 2^{\vert E\vert} \le 2^{a_{k}}. \nonumber
\end{align}
Therefore by the union bound and the fact $|\mathcal{N}(F, \varepsilon, \alpha)|\le (3/\varepsilon)^{|F|}$, we have
\begin{align}\label{PP}
	\mathbb{P}\biggl\{&\sup_{\substack{F\subset \{1,\cdots, n\}\\ \vert F\vert\le a_{k}}}\sup_{E\subset F}\sup_{v\in \mathcal{N}(F,4^{-k}, \sqrt{\frac{4^{k}}{n}})}\sum\limits_{i\in E}\sum\limits_{j\in F\backslash E}\vert v(i)X_{ij}\vert>\sigma_{2}\sqrt{4^{k}/n}L_{k}\biggr\}\nonumber\\
	&\le \sum_{m\le a_{k}}\binom{n}{m}(3\cdot 4^{k})^{a_{k}}\sup_{F,E,v} \mathbb{P}(\Omega(F,E,v))\nonumber\\
	&\le \sum_{m\le n/4^{k-1}}\binom{n}{m}(6\cdot 4^{k})^{n/4^{k-1}}e^{-L_{k}}\le (6e\cdot 4^{2k})^{n/4^{k-1}}e^{-L_{k}}\nonumber\\
	&=\exp\big(\frac{4n}{4^{k}}\log (6e\cdot 4^{2k})-L_{k}\big)\le e^{-L_{k}/2},
\end{align}
where the third inequality is due to $\sum_{m=1}^{k}\binom{n}{m}\le (\frac{en}{k})^{k}$.

For $k=0$, let $L=2\sqrt{n}t$ for $t\ge 1$. Due to the definition of $l$, we have $L\ge \frac{2nt}{4^{l}}\log (96e\cdot 4^{l})$. Following the same line as above, we have

\begin{align}\label{explain}
	\mathbb{P}\biggl\{&\sup_{\substack{F\subset \{1,\cdots, n\}\\ \vert F\vert\le a_{0}}}\sup_{E\subset F}\sup_{v\in \mathcal{N}(F,1/16,1)}\sum_{i\in E}\sum_{j\in F\backslash E}v(i)v(j)X_{ij}> \sigma_{2} L\biggr\}
	\le e^{-\sqrt{n}t}.
\end{align}

Recall the equation (\ref{D_{x}}). Then we obtain by the union boud
\begin{align}\label{17}
	\mathbb{P}\bigg\{\sup_{x\in\mathcal{M}}D_{x}^{'}> 4\sigma_{2}L+4\sigma_{2}\sum_{k=1}^{l}\sqrt{\frac{4^{k}}{n}}L_{k}\bigg\}\le& \exp(-L/2)+\sum_{k=1}^{l}\exp(-L_{k}/2)\nonumber\\
	= &\exp(-\sqrt{n}t)+\sum_{k=1}^{l}\exp\left(-\frac{4nt}{4^{k}}\log (3e\cdot4^{2k})\right)\nonumber\\	
	\le& \exp(-\sqrt{n}t)+l\exp\left(-\frac{4nt}{4^{l}}\log (3e\cdot4^{2l})\right).
\end{align}

Observe that
\begin{align}
	\sum_{k=1}^{l}\sqrt{\frac{4^{k}}{n}}L_{k}=\sum_{k=1}^{l}\sqrt{\frac{4^{k}}{n}}\frac{4nt}{4^{k}}\log (3e\cdot4^{2k})\lesssim\sqrt{n}t.\nonumber
\end{align}

Recall the choice of $l$ and note that $l\ge 2$. We have
$$\sqrt{n}< \frac{n}{4^{l-1}}\log 96e\cdot4^{l-1}<\frac{4n}{4^{l}}\log (3e\cdot4^{2l})$$
and $l\lesssim \sqrt{n}$.
Then, we obtain by adjusting the constant
\begin{align}\label{18}
	&\mathbb{P}\{\sup_{x\in\mathcal{M}}D_{x}^{'}>C_{10}\sigma_{2} \sqrt{n}t\}\nonumber\\
	\le &e^{-\sqrt{n}t}+le^{-\sqrt{n}t}\le C_{11}\sqrt{n}e^{-\sqrt{n}t},
\end{align}
where $C_{10}, C_{11}$ are universal constants and $t\ge 1$.

Next, we shall follow the same line to  estimate $D_{x}^{''}$. Consider $\mathcal{M}_{k}=\mathcal{M}_{k}^{'}\cap 2B_{2}^{n}$ for $1\le k\le l$, where $\mathcal{M}_{k}^{'}$ consists of all vectors of the form $x=x_{0}+\sum_{s=k+1}^{l}x_{s}$, where the $x_{i} (i=0, k+1, k+2,\cdots, l)$ have disjoint supports and
\begin{align}
	x_{0}\in \bigcup_{\substack{E\subset \{1,\cdots, n\}\\ \vert E\vert\le a_{0}}}\mathcal{N}(E, 1/16, 1),\quad x_{s}\in \bigcup_{\substack{E\subset \{1,\cdots, n\}\\ \vert E\vert\le a_{s}}}\mathcal{N}(E, 4^{-s}, \sqrt{\frac{4^{s}}{n}})\nonumber
\end{align}
for $s\ge k+1$.

Note that $\mathcal{M}_{k}\subset2B_{2}^{n}$ and
\begin{align}
	\vert \mathcal{M}_{k}\vert&\le 48^{n/4^{l}}\big(\sum_{u_{0}\le n/4^{l}}{n\choose u_{0}}\big)\prod_{s=k+1}^{l}\big((3\cdot4^{s})^{n/4^{s-1}}(\sum_{u_{s}\le n/4^{s-1}}{n\choose u_{s}})\big)\nonumber\\
	&\le ( 48e\cdot4^{l})^{n/4^{l}}\prod_{s=k+1}^{l}(3e\cdot 4^{2s})^{n/4^{s-1}}\nonumber\\
	&\le \exp\big(\frac{n}{4^{l}}\log (48e\cdot 4^{l}) +\sum_{s=k+1}^{l}\frac{4n}{4^{s}}\log (3e\cdot 4^{2s})\big)\nonumber\\
	&\le \exp\big(\sum_{s=k+1}^{l+1}\frac{4n}{4^{s}}\log (3e\cdot 4^{2s})\big)\nonumber\\
	&\le \exp\Big(\frac{4n}{4k}\big(\sum_{s=1}^{l-k+1}\frac{\log(3e\cdot 4^{2k})}{4^{s}}+\sum_{s=1}^{l-k+1}\frac{\log(4^{2s})}{4^{s}}\big)\Big)\le \exp\big(\frac{4n}{4k}\log(48e\cdot 4^{2k})\big).\nonumber
\end{align}

We also observe that
\begin{align}
	D_{x}^{''}=&2\sum_{k=1}^{l}\sum_{i\in F_{k}} x(i) \sum_{r\in G_{k}}\sum_{j\in F_{r}}x(j)X_{ij}\nonumber\\
	\le &2\sum_{k=1}^{l}\sup_{\substack{F\subset \{1,\cdots, n\}\\ \vert F\vert\le a_{k}}}\sup_{u\in \mathcal{N}(F,4^{-k},\sqrt{\frac{4^{k}}{n}})}\sup_{v\in \mathcal{M}_{k}}\sum_{i\in F} u(i)\sum_{j\notin F}v(j)X_{ij}.\nonumber
\end{align}

Similar to (\ref{PP}), we have for fixed $k, 1\le k\le l$
\begin{align}
	&\mathbb{P}\{ \sup_{\substack{F\subset \{1,\cdots, n\}\\ \vert F\vert\le a_{k}}}\sup_{u\in \mathcal{N}(F,4^{-k},\sqrt{\frac{4^{k}}{n}})}\sup_{v\in \mathcal{M}_{k}}\sum_{i\in F} u(i)\sum_{j\notin F}v(j)X_{ij}>\sigma_{2}\sqrt{4^{k}/n}L(k)\}\nonumber\\
	\le& |\mathcal{M}_{k}|(3e\cdot4^{4k})^{\frac{4n}{4^{k}}}e^{-L(k)}\le e^{-L_{k}/2},\nonumber
\end{align}
where $L(k)=\frac{16nt}{4^{k}}\log(48e\cdot4^{2k})$ and $t\ge 1$.

It follows from the union bound
\begin{align}
	&\mathbb{P}\big\{D_{x}^{''}>2\sigma_{2}\sum_{k=1}^{l}\sqrt{\frac{4^{k}}{n}}L(k) \big\}
	\le \sum_{k=1}^{l}e^{-L_{k}/2}\nonumber\\
	\le &\sum_{k=1}^{l}\exp \big(\frac{-8nt}{4^{k}}\log(48e\cdot4^{2k})\big)\le l\exp \big(\frac{-8nt}{4^{l}}\log(48e\cdot4^{2l})\big)\nonumber.
\end{align}
Similar to (\ref{18}), we obtain by adjusting the constant
\begin{align}
	\mathbb{P}\{\sup_{x\in \mathcal{M}}D_{x}^{''}>C_{12}\sigma_{2} \sqrt{n}t\}\le C_{13}\sqrt{n}e^{-\sqrt{n}t},
\end{align}
where $C_{12}$ and $C_{13}$ are universal constants and $t\ge 1$.

Since $D_{x}=D_{x}^{'}+D_{x}^{''}$, then
\begin{align}
	\mathbb{P}\{\sup_{x\in \mathcal{M}}D_{x}>C_{14}\sigma_{2} \sqrt{n}t\}\le C_{15}\sqrt{n}e^{-\sqrt{n}t},\nonumber
\end{align}
where $C_{14}$ and $C_{15}$ are universal constants and $t\ge 1$. Note that $\sqrt{n}\le e^{\sqrt{n}/2}$, then we have by taking $t=1+u/\sqrt{n}$
\begin{align}\label{end2}
	\mathbb{P}\{\sup_{x\in \mathcal{M}}D_{x}>C_{16}\sigma_{2} \sqrt{n}+C_{16}\sigma_{2}u\}\le C_{17}e^{-u},
\end{align}
where $C_{16}, C_{17}$ are universal constants and $u\ge 0$.

By virtue of (\ref{end1}) and (\ref{end2}), we have
\begin{align}
	\mathbb{P}\{\Vert X\Vert\ge C_{18}(b+\sigma_{2}\sqrt{n})+2u\}
	\le &\mathbb{P}\{4\max_{i} X_{ii} \ge C_{18}b+u\}\nonumber\\
	&+\mathbb{P}\{\sup_{x\in\mathcal{M}}D_{x} \ge C_{18}\sqrt{n}+\sigma_{2}u\}\nonumber,
\end{align}
where $C_{18}$ is a universal constant and $u\ge 0$.
Then we get the desired result (\ref{5}) by adjusting the constant.
\end{proof}

\begin{myrem}
	Throught the proof, an important observation is that, if for a vector $x$ there is a simultaneous control of the size of support and its $l_{\infty}$-norm, then one can estimate $\left<Xx, x\right>$ with large probability, see $(4.7)$. It is therefore natural to decompose the vectors on the unit sphere into $l(n)$ parts admitting such a simultaneous control as above. At last, using the union bound for the $l(n)$ parts yields the desired result.
\end{myrem}

\section{Extensions and discussion}
\subsection{Exponentially Decaying Variables}

We have phrased our results in terms of sub-Exponential entries. In this section, we develop some of our  results into more general distributions of the entries. In particular, we consider the class of distributions whose tail decay is of the type $\exp(-t^{\alpha})$ or faster, where $0< \alpha \le 2$. Note that $\alpha =2$ corresponds to sub-Gaussian distributions and $\alpha =1$ to sub-Exponential distributions.

Let us begin by considering the following lemma, which is Exercise 2.7.3 in \cite{highdimension}.

\begin{mylem}
	Let $\eta$ be a random variable. Then the following properties are equivalent for $0<\alpha\le 2$.
	\begin{enumerate}
		\item $\mathbb{P}\{\vert \eta\vert\ge t \}\le 2\exp(-t^{\alpha}/K_{1}^{\alpha})$ for all $t\ge 0$;
		\item $(\mathbb{E}\vert \eta\vert^{p})^{1/p}\le K_{2}p^{\alpha^{-1}}$ ;
		\item $\mathbb{E}\exp(\lambda^{\alpha}\vert\eta\vert^{\alpha})\le \exp(K_{3}^{\alpha}\lambda^{\alpha})$ for all $\lambda$ such that $0\le \lambda\le \frac{1}{K_{3}}$;
		\item $\mathbb{E}\exp(\vert\eta\vert^{\alpha}/K_{4}^{\alpha})\le2$ for a constant $K_{4}>0$.
	\end{enumerate}
\end{mylem}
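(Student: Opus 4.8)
The plan is to prove the four properties equivalent by closing the cycle of implications $(1)\Rightarrow(2)\Rightarrow(3)\Rightarrow(4)\Rightarrow(1)$, exactly parallel to the sub-Exponential case $\alpha=1$ already recorded in Lemma 2.1. At each arrow the relevant constant changes only by a factor depending on $\alpha$, which is harmless since one only claims comparability up to $\alpha$-dependent factors.

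For $(1)\Rightarrow(2)$ I would pass from the tail bound to moments through the layer-cake identity
\begin{align}
\mathbb{E}\vert\eta\vert^{p}=\int_{0}^{\infty}p t^{p-1}\mathbb{P}\{\vert\eta\vert>t\}\,dt\le 2p\int_{0}^{\infty}t^{p-1}\exp(-t^{\alpha}/K_{1}^{\alpha})\,dt,\nonumber
\end{align}
and then substitute $u=t^{\alpha}/K_{1}^{\alpha}$ to recognize a Gamma integral, obtaining $\mathbb{E}\vert\eta\vert^{p}\le \frac{2p}{\alpha}K_{1}^{p}\Gamma(p/\alpha)$. The elementary bound $\Gamma(p/\alpha)\le (p/\alpha)^{p/\alpha}$ together with $(2p/\alpha)^{1/p}\le C$ then yields $(\mathbb{E}\vert\eta\vert^{p})^{1/p}\le K_{2}p^{1/\alpha}$ with $K_{2}$ a constant multiple (depending on $\alpha$) of $K_{1}$.

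For $(2)\Rightarrow(3)$ I would expand the exponential as a power series and insert the moment bound at $p=\alpha k$:
\begin{align}
\mathbb{E}\exp(\lambda^{\alpha}\vert\eta\vert^{\alpha})=\sum_{k\ge 0}\frac{\lambda^{\alpha k}\,\mathbb{E}\vert\eta\vert^{\alpha k}}{k!}\le \sum_{k\ge 0}\frac{\lambda^{\alpha k}K_{2}^{\alpha k}(\alpha k)^{k}}{k!}.\nonumber
\end{align}
Using $k!\ge (k/e)^{k}$ gives $(\alpha k)^{k}/k!\le (\alpha e)^{k}$, so the series is dominated by the geometric series $\sum_{k}(\alpha e K_{2}^{\alpha}\lambda^{\alpha})^{k}$, which converges once $\alpha e K_{2}^{\alpha}\lambda^{\alpha}\le 1/2$; bounding $1/(1-x)\le \exp(2x)$ then produces $(3)$ with $K_{3}$ comparable to $K_{2}$. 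I expect this step to be the main obstacle: when $\alpha<1$ the exponent $p=\alpha k$ drops below $1$ for the first few $k$, where the hypothesis $(2)$ need not apply, so these finitely many initial terms must be controlled separately (for instance by Jensen's inequality from a fixed higher moment), and one must track the $\alpha$-dependence in the convergence threshold throughout.

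The remaining two implications are immediate. For $(3)\Rightarrow(4)$ I would take $\lambda=1/(cK_{3})\le 1/K_{3}$ in $(3)$, giving $\mathbb{E}\exp(\vert\eta\vert^{\alpha}/(cK_{3})^{\alpha})\le \exp(c^{-\alpha})$, and choose the constant $c$ so large that $c^{-\alpha}\le\log 2$; this is exactly $(4)$ with $K_{4}=cK_{3}$. Finally $(4)\Rightarrow(1)$ follows from the exponential Markov inequality,
\begin{align}
\mathbb{P}\{\vert\eta\vert\ge t\}=\mathbb{P}\{\exp(\vert\eta\vert^{\alpha}/K_{4}^{\alpha})\ge \exp(t^{\alpha}/K_{4}^{\alpha})\}\le \exp(-t^{\alpha}/K_{4}^{\alpha})\,\mathbb{E}\exp(\vert\eta\vert^{\alpha}/K_{4}^{\alpha})\le 2\exp(-t^{\alpha}/K_{4}^{\alpha}),\nonumber
\end{align}
which recovers $(1)$ with $K_{1}=K_{4}$ and closes the cycle.
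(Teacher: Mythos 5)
Your proposal is correct and takes essentially the same route as the paper: the identical cycle $(1)\Rightarrow(2)\Rightarrow(3)\Rightarrow(4)\Rightarrow(1)$, with the layer-cake/Gamma-function bound for the first arrow, the Taylor-series-plus-geometric-series argument with $1/(1-x)\le e^{2x}$ for the second, and the choice of $\lambda$ and exponential Markov inequality for the last two. The subtlety you flag in $(2)\Rightarrow(3)$ when $\alpha<1$ (the moments $\mathbb{E}\vert\eta\vert^{\alpha k}$ of order below $1$, where hypothesis $(2)$ need not apply directly) is a genuine point that the paper's proof silently glosses over, and your proposed patch via Jensen's inequality for the finitely many initial terms is the right way to close it.
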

\begin{proof}
	(1)$\Rightarrow$(2): Without loss of generality, we assume property (1) holds with $K_{1}=1$. By virtue of (\ref{1}), we have
	\begin{align}
		\mathbb{E}\vert \eta\vert^{p}&=\int_{0}^{\infty}\mathbb{P}\{ \vert \eta\vert^{p}\ge u\}\, du\nonumber\\
		&=\int_{0}^{\infty}\mathbb{P}\{ \vert \eta\vert\ge t\}pt^{p-1}\, dt\nonumber\\
		&\le \int_{0}^{\infty}2e^{-t^{\alpha}}pt^{p-1}\, dt\nonumber\\	
		&=\frac{2p}{\alpha}\Gamma(\frac{p}{\alpha})\le2(\frac{p}{\alpha})^{p/\alpha +1} \nonumber,
	\end{align}
where the last inequality is due to $\Gamma(x)\le x^{x}$ by Stirling's approximation.	Taking the $p$th root yields property (2).

(2)$\Rightarrow$(3): Assume that property (2) holds with $K_{2}=1$. We obtain by the Taylor series expansion of the exponential function
\begin{align}\label{0.5.1}
	\mathbb{E}\exp(\lambda^{\alpha}\vert \eta\vert^{\alpha})=1+\sum_{p=1}^{\infty}\frac{\lambda^{\alpha p}\mathbb{E}\vert \eta\vert^{\alpha p}}{p!}.
\end{align}
Note that $\mathbb{E}\vert \eta\vert^{\alpha p}\le (\alpha p)^{p}$ by property (2) and $p!\ge (p/e)^{p}$ by Stirling's approximation. Hence, (\ref{0.5.1}) is bounded by
\begin{align}
	1+\sum_{p=1}^{\infty}(\alpha e \lambda^{\alpha})^{p}=\frac{1}{1-\alpha e \lambda^{\alpha}},\nonumber
\end{align}
provided that $\alpha e \lambda^{\alpha}<1$, in which case the geometric series above converges. Note that, $1/(1-x)\le e^{2x}$ when $0\le x\le 1/2$. Hence, we have for $0\le \lambda \le (2\alpha e)^{-\alpha^{-1}}$.
\begin{align}
	\mathbb{E}\exp (\lambda^{\alpha}\vert \eta\vert^{\alpha})\le \exp(2\alpha e\lambda^{\alpha}).\nonumber
\end{align}

(3)$\Rightarrow$(4): This result is trivial.

(4)$\Rightarrow$(1): Assume that property (4) holds with $K_{4}=1$.
\begin{align}
	\mathbb{P}\{\vert \eta\vert>t\}&=\mathbb{P}\{ e^{\vert \eta\vert^{\alpha}}\ge e^{t^{\alpha}}\}\nonumber\\
	&\le e^{-t^{\alpha}}\mathbb{E}e^{\vert \eta\vert^{\alpha}}\le 2e^{-t^{\alpha}}.\nonumber
\end{align}

We have finished the proof of Lemma 5.1.
\end{proof}

\begin{myrem}
	When $1\le \alpha\le 2$, the following quantity
	\begin{align}\label{5.1}
	\Vert \eta\Vert_{\psi_{\alpha}}:=	\inf \{t>0:  \mathbb{E}\exp(\vert\eta\vert^{\alpha}/t^{\alpha})\le2\}
	\end{align}
can define a norm of random variable $\eta$. When $0<\alpha<1$, (\ref{5.1}) is not a norm due to that
$\exp(x^{\alpha})$ is not convex on the interval $(0, \infty)$.
\end{myrem}

The next lemma is an extension of Lemma 2.6.
\begin{mylem}\label{5.2}
	Suppose that $\xi_{1}, \xi_{2}, \cdots, \xi_{n}$ are  independent random variables with $\Vert\xi_{i}\Vert_{\psi_{\alpha}}=1$ satisfying
	\begin{align}
		(\mathbb{E}\vert \xi_{i}\vert^{p})^{1/p}\asymp p^{\alpha^{-1}},
	\end{align}
where $0<\alpha\le 2$. Let $c_{1},\cdots, c_{n}$ be a sequence of positive constants. We have
\begin{align}
	\mathbb{E}\max_{i\le n}\vert c_{i}\xi_{i}\vert\asymp \max_{i\le n}c_{i}^{*}\log^{\alpha^{-1}}(i+1),\nonumber
\end{align}
where $\{c_{i}^{*}\}$ is decreasing rearrangement of $\{c_{i}\}$.
\end{mylem}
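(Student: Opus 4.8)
The plan is to prove the two directions separately by elementary tail estimates, using only the two-sided moment control $(\mathbb{E}|\xi_i|^p)^{1/p}\asymp p^{1/\alpha}$ together with the defining relation $\|\xi_i\|_{\psi_\alpha}=1$. Although the statement is advertised as an extension of Lemma 2.6, I would deliberately \emph{not} route through the comparison Lemma 2.5 here: the $\xi_i$ are not assumed centered (so Lemma 2.5 does not directly apply), and reducing to exponential references forces a factor $t\mapsto t^{1/\alpha}$ that is concave for $\alpha\ge1$ but convex for $\alpha<1$, making the Jensen step fail uniformly. A direct tail computation handles all $0<\alpha\le 2$ at once. Write $M:=\max_{i\le n}c_i^*\log^{1/\alpha}(i+1)$ and, by permutation invariance, assume $c_1\ge\cdots\ge c_n$, so $c_i=c_i^*$.

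For the upper bound I would first record that $\|\xi_i\|_{\psi_\alpha}=1$ yields, via the implication $(4)\Rightarrow(1)$ of Lemma 5.1, a uniform tail estimate $\mathbb{P}\{|\xi_i|\ge t\}\le 2e^{-t^\alpha/K}$ with $K$ universal. Then I integrate the tail of the maximum,
\[
\mathbb{E}\max_{i\le n}c_i|\xi_i|=\int_0^\infty \mathbb{P}\Big\{\max_{i\le n}c_i|\xi_i|>s\Big\}\,ds,
\]
bounding the integrand by $1$ for $s\le CM$ (contributing $\lesssim M$) and, for $s>CM$, by the union bound $\sum_{i}2e^{-(s/c_i)^\alpha/K}$. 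The decisive observation is that the definition of $M$ forces $(M/c_i)^\alpha\ge\log(i+1)$, whence $(s/c_i)^\alpha\ge (s/M)^\alpha\log(i+1)$ and each summand is at most $(i+1)^{-(s/M)^\alpha/K}$. Summing this super-polynomial tail in $i$ and then substituting $u=s/M$ in the remaining $s$-integral gives a total $\lesssim M$ once $C$ is taken large enough that $(s/M)^\alpha/K\ge 2$ on the range of integration.

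For the lower bound it suffices, after fixing $j\le n$, to show $\mathbb{E}\max_{i\le j}|\xi_i|\gtrsim\log^{1/\alpha}(j+1)$: restricting the maximum to the $j$ largest coefficients and bounding them below by $c_j^*$ then gives $\mathbb{E}\max_{i}c_i|\xi_i|\ge c_j^*\,\mathbb{E}\max_{i\le j}|\xi_i|\gtrsim c_j^*\log^{1/\alpha}(j+1)$, and one maximizes over $j$. To extract a lower tail from the lower moment bound $(\mathbb{E}|\xi_i|^p)^{1/p}\ge c_1 p^{1/\alpha}$, I would apply the Paley--Zygmund inequality to $|\xi_i|^p$, using the matching upper bound at exponent $2p$ to control $\mathbb{E}|\xi_i|^{2p}$; this produces $\mathbb{P}\{|\xi_i|\ge \tfrac12 c_1 p^{1/\alpha}\}\ge\tfrac14\rho^{2p}$ with $\rho=c_1/(c_2 2^{1/\alpha})\in(0,1)$. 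Choosing $p\asymp\log(j+1)$, tuned so that $\rho^{2p}\asymp 1/j$, converts this into $\mathbb{P}\{|\xi_i|\ge c''\log^{1/\alpha}(j+1)\}\gtrsim 1/j$, after which $\mathbb{P}\{\max_{i\le j}|\xi_i|\ge c''\log^{1/\alpha}(j+1)\}=1-(1-p_j)^j\gtrsim 1$ and the claim follows.

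The main obstacle is this lower bound: one must convert the purely moment-theoretic estimate $(\mathbb{E}|\xi_i|^p)^{1/p}\gtrsim p^{1/\alpha}$ into a genuine lower bound on $\mathbb{P}\{|\xi_i|\ge t\}$ at the correct scale $t\asymp\log^{1/\alpha}(j+1)$, and to align the constants so that the survival probability is $\gtrsim 1/j$ at exactly that scale. The Paley--Zygmund step is what makes this work, and it is legitimate precisely because $\rho<1$, which is guaranteed by $c_1\le c_2$ and $2^{1/\alpha}>1$.
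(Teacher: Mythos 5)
Your proof is correct, but it takes a genuinely different route from the paper's. The paper proves both directions by comparison (Lemma 2.5) against powers of independent Gaussians: the lower bound comes from $\mathbb{E}\max_i c_i\vert\eta_i\vert^{2/\alpha}\ge\bigl(\mathbb{E}\max_i c_i^{\alpha/2}\vert\eta_i\vert\bigr)^{2/\alpha}$ together with known Gaussian maxima estimates from \cite{structure}, and the upper bound is proved first for $1\le\alpha\le2$ using variables of the form $\vert\eta_i\vert\,\vert\eta_i'\vert^{(2-\alpha)/\alpha}$ and a conditional Jensen step, then extended to all $0<\alpha\le 2$ by induction over the ranges $2/(k+2)\le\alpha\le2/(k+1)$ with the higher powers $\vert\eta_i\vert^{k+1}\vert\eta_i'\vert^{(2-(k+1)\alpha)/\alpha}$. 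You instead work directly with tails: the upper bound by union bound and integration of $\mathbb{P}\{\vert\xi_i\vert\ge t\}\le2e^{-t^\alpha/K}$ (which indeed follows from $\Vert\xi_i\Vert_{\psi_\alpha}=1$ exactly as in Lemma 5.1, (4)$\Rightarrow$(1)), exploiting that the definition of $M$ gives $(M/c_i)^\alpha\ge\log(i+1)$ so the union bound sums; and the lower bound by Paley--Zygmund applied to $\vert\xi_i\vert^p$ with $p\asymp\log(j+1)$, followed by independence to get a constant survival probability for the maximum over $j$ variables. Both arguments are sound, and your route buys real advantages: it treats all $0<\alpha\le2$ in one stroke with no induction; it is self-contained (no Gaussian input, no comparison theorem); and it sidesteps the fact that Lemma 2.5 is stated for \emph{centered} variables, whereas neither the $\xi_i$ of Lemma 5.2 nor the Gaussian powers $\vert\eta_i\vert^{2/\alpha}$ are centered --- a point the paper's own proof glosses over. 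What the paper's route buys is the flexibility of the comparison method: Lemma 2.5 transfers any symmetric convex functional, not just the maximum, so the same scaffolding serves elsewhere in the paper (Lemma 2.6, Theorem 3.1), and it recycles existing Gaussian estimates rather than redoing tail computations. Two small repairs to make your write-up airtight: for small $j$ the choice $p\asymp\log(j+1)$ can drop below the range where the moment hypothesis applies, so take $p=\max(2,\,c\log(j+1))$ and note the claim is trivial for bounded $j$; and your implied constants depend on $\alpha$ (through the choice $C^\alpha\ge 2K$ and through $\int_C^\infty e^{-u^\alpha/K}\,du$), which is fine since the paper's own constants in Section 5 are of the form $C(\alpha)$.
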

\begin{myrem}
	Condition (\ref{5.2}) implies that
	\begin{align}
		c_{1}(\alpha)e^{-t^{\alpha}/c_{1}(\alpha)}\le \mathbb{P}\{ \vert \xi_{i}\vert\ge t\}\le c_{2}(\alpha)e^{-t^{\alpha}/c_{2}(\alpha)},\nonumber
	\end{align}
which has been proved in Lemma 4.6 of \cite{inventions}.
\end{myrem}
\begin{proof}
	Let $\eta_{1},\cdots,\eta_{n}$ be a sequence of independent standard normal random variables. Then $\vert\eta_{i}\vert^{2/\alpha}$ satisfies
	\begin{align}
		(\mathbb{E}\vert\eta_{i}\vert^{2p/\alpha} )^{1/p}\asymp p^{\alpha^{-1}}.\nonumber
	\end{align}
Note that $2/\alpha \ge 1$, hence we have
\begin{align}
	\mathbb{E}\max_{i\le n}c_{i}\vert\eta_{i}\vert^{2/\alpha}\ge (\mathbb{E}\max_{i\le n}c_{i}^{\alpha/2}\vert\eta_{i}\vert)^{2/\alpha}\asymp \max_{i\le n}c_{i}^{*}\log^{\alpha^{-1}}(i+1).\nonumber
\end{align}

It follows from Lemma 2.5
\begin{align}
\mathbb{E}\max_{i\le n}\vert c_{i}\xi_{i}\vert\gtrsim\max_{i\le n}c_{i}^{*}\log^{\alpha^{-1}}(i+1).\nonumber
\end{align}

Next, we shall prove the opposite inequality by an induction argument. Let $\eta_{1}^{'},\cdots, \eta_{n}^{'}$ be a sequence of copies of $\xi_{i}$.

For $1\le \alpha\le 2$, we have
\begin{align}
	(\mathbb{E}\vert \eta_{i}\vert^{p}\vert\eta_{i}^{'}\vert^{(2-\alpha)p/\alpha} )^{1/p}\asymp p^{\alpha^{-1}}.\nonumber
\end{align}

Note that $0\le (2-\alpha)/\alpha\le 1$, hence we have
\begin{align}\label{5.3}
\mathbb{E}\max_{i\le n}c_{i}	\vert \eta_{i}\vert\vert\eta_{i}^{'}\vert^{(2-\alpha)/\alpha}&=\mathbb{E}\Big(\mathbb{E}\big(\max_{i\le n}c_{i}	\vert \eta_{i}\vert\vert\eta_{i}^{'}\vert^{(2-\alpha)/\alpha}\vert \eta_{1}^{'},\cdots, \eta_{n}^{'}\big)\Big)\nonumber\\
&\lesssim\mathbb{E}\max_{i\le n}c_{i}\log^{1/2}(i+1)\vert\eta_{i}^{'}\vert^{(2-\alpha)/\alpha}\nonumber\\
&\le \Big(\mathbb{E}\max_{i\le n}\big(c_{i}\log^{1/2}(i+1)\big)^{\alpha/(2-\alpha)}\vert\eta_{i}^{'}\vert\Big)^{(2-\alpha)/\alpha}\nonumber\\
&\lesssim \max_{i\le n}c_{i}\log^{\alpha^{-1}}(i+1).
\end{align}
Using Lemma 2.5 again, we get the opposite inequality for $1\le \alpha\le 2$.

Assume that the desired result is true for $2/(k+1)\le \alpha\le 2$. Then for $2/(k+2)\le \alpha\le 2/(k+1)$, we consider the following random variable $$\vert \eta_{i}\vert^{k+1}\vert \eta_{i}^{'}\vert^{(2-(k+1)\alpha)/\alpha}.$$ Note that $0\le (2-(k+1)\alpha)/\alpha\le 1$, hence we get the oppsite inequality for $2/(k+2)\le \alpha\le 2/(k+1)$ following the same line in (\ref{5.3}).

Hence, we have for $0<\alpha\le 2$
\begin{align}
	\mathbb{E}\max_{i\le n}\vert c_{i}\xi_{i}\vert\lesssim \max_{i\le n}c_{i}\log^{\alpha^{-1}}(i+1).\nonumber
\end{align}

By permutation invariance, we have
\begin{align}
	\mathbb{E}\max_{i\le n}\vert c_{i}\xi_{i}\vert=\mathbb{E}\max_{i\le n}\vert c_{i}^{*}\xi_{i}\vert\lesssim \max_{i\le n}c_{i}^{*}\log^{\alpha^{-1}}(i+1),\nonumber
\end{align}
which concludes the desired result.
\end{proof}

Next, we discuss the case for $1\le \alpha \le 2$. Let $Y$ be an $n\times n$ diagnal random matrix. Denote its entries by $Y_{1},\cdots, Y_{n}$ which are independent centered random variables satisfying for $p\ge 1$
\begin{align}
	(\mathbb{E}\vert Y_{i}\vert^{p})^{1/p}\asymp \Vert Y_{i}\Vert_{\psi_{\alpha}}p^{\alpha^{-1}}\nonumber.
\end{align}
\begin{mytheo}
	The spectral norm of $Y$ satisfies the following concentration inequality
	\begin{align}
		\mathbb{P}\{\Vert Y\Vert\ge C(\alpha)\max_{i\le n}\Vert Y_{i}\Vert_{\psi_{\alpha}}\log^{\alpha^{-1}}n+t\}\le C_{0}\exp(-C_{1}t/\max_{i}\Vert Y_{i}\Vert_{\psi_{\alpha}}),\nonumber
	\end{align}
where $C(\alpha)$ is a positive constant independent of $n$ but depending on $\alpha$ and $C_{0}, C_{1}$ are universal constants.
\end{mytheo}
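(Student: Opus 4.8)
The plan is to imitate the proof of Theorem 3.1 but to replace its moment-generating-function comparison by a direct union bound on tail probabilities, which is the device that correctly accommodates the exponent $\alpha$. Write $\beta:=\max_{i\le n}\Vert Y_i\Vert_{\psi_\alpha}$. Since $Y$ is diagonal, $\Vert Y\Vert=\max_{i\le n}\vert Y_i\vert$, so the whole problem reduces to controlling the maximum of $n$ independent $\psi_\alpha$ variables. By the hypothesis $(\mathbb{E}\vert Y_i\vert^p)^{1/p}\asymp\Vert Y_i\Vert_{\psi_\alpha}p^{1/\alpha}$ together with Lemma 5.1, each $Y_i$ obeys the tail estimate $\mathbb{P}\{\vert Y_i\vert\ge s\}\le 2\exp(-(s/(c\beta))^\alpha)$ for a constant $c$ depending only on $\alpha$. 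A plain union bound then gives, for every $s\ge 0$,
\begin{align}
\mathbb{P}\{\Vert Y\Vert\ge s\}\le \sum_{i\le n}\mathbb{P}\{\vert Y_i\vert\ge s\}\le 2n\exp\bigl(-(s/(c\beta))^\alpha\bigr).\nonumber
\end{align}

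Next I would insert the centering. Taking $s=C(\alpha)\beta\log^{1/\alpha}n+t$ and using the elementary superadditivity $(a+b)^\alpha\ge a^\alpha+b^\alpha$, valid precisely because $\alpha\ge 1$, I can split the exponent as
\begin{align}
\Bigl(\tfrac{C(\alpha)\beta\log^{1/\alpha}n+t}{c\beta}\Bigr)^\alpha\ge \Bigl(\tfrac{C(\alpha)}{c}\Bigr)^\alpha\log n+\Bigl(\tfrac{t}{c\beta}\Bigr)^\alpha.\nonumber
\end{align}
Choosing $C(\alpha)$ large enough that $(C(\alpha)/c)^\alpha\ge 1$ makes the factor $2n\exp(-(C(\alpha)/c)^\alpha\log n)\le 2$, leaving the clean bound $\mathbb{P}\{\Vert Y\Vert\ge C(\alpha)\beta\log^{1/\alpha}n+t\}\le 2\exp(-(t/(c\beta))^\alpha)$. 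This already yields a $\psi_\alpha$-type tail that is stronger than the asserted one; the final step is simply to weaken it to the stated exponential form. For $t\ge c\beta$ one has $(t/(c\beta))^\alpha\ge t/(c\beta)$, because the base exceeds $1$ and $\alpha\ge 1$, so $\exp(-(t/(c\beta))^\alpha)\le \exp(-t/(c\beta))$; for $0\le t<c\beta$ the quantity $\exp(-t/(c\beta))$ is bounded below by a positive constant, so the inequality persists after enlarging $C_0$. Collecting the two regimes produces $\mathbb{P}\{\Vert Y\Vert\ge C(\alpha)\beta\log^{1/\alpha}n+t\}\le C_0\exp(-C_1 t/\beta)$ with universal $C_0,C_1$, as required.

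The step I expect to be the genuine obstacle is recognizing that the moment-generating-function route used for Theorem 3.1 does \emph{not} transfer to $\alpha>1$. There, the exponential order statistics of Lemma 2.4 furnished an explicit decomposition of $\max_i\eta_i$ whose fluctuations were sub-Exponential; no such representation is available for general $\psi_\alpha$ maxima. Moreover, bounding $\mathbb{E}e^{\lambda\max_i\vert Y_i\vert}\le\sum_i\mathbb{E}e^{\lambda\vert Y_i\vert}$ fails to close, since for $\alpha>1$ the Laplace transform grows like $\exp(c(\lambda\beta)^{\alpha/(\alpha-1)})$, and the value of $\lambda$ needed to absorb the factor $n$ against the small centering $\beta\log^{1/\alpha}n$ is driven out of the admissible range. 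Union-bounding the probabilities rather than the transforms is therefore the essential move; the only remaining care is the superadditivity inequality and the two-regime comparison, both of which are elementary once $\alpha\ge 1$ is invoked.
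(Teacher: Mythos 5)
Your proposal is correct, but it takes a genuinely different route from the paper's own proof. The paper proves this theorem by transplanting the machinery of Theorem 3.1: it invokes the comparison theorem (Lemma 2.5) to dominate $\max_{i}\vert Y_{i}\vert$ by $(\max_{i}\beta_{i})\max_{i}\xi_{i}^{1/\alpha}$ with $\xi_{i}$ standard exponential variables, and then runs a Chernoff/moment-generating-function argument in which the concentration of $\max_{i}\xi_{i}$ around its mean ($\asymp\log n$) is supplied by the R\'enyi representation of exponential order statistics (Lemma 2.4 and Remark 2.1), together with the subadditivity of $x\mapsto x^{1/\alpha}$. You instead work directly with tail probabilities: a per-entry $\psi_{\alpha}$ tail bound, a union bound over the $n$ diagonal entries, superadditivity of $x\mapsto x^{\alpha}$ (valid since $\alpha\ge 1$) to absorb the $\log^{1/\alpha}n$ centering against the factor $n$, and a two-regime comparison to weaken the resulting stretched-exponential tail $\exp\bigl(-(t/(c\beta))^{\alpha}\bigr)$ to the stated exponential form. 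Your route is more elementary and self-contained: it needs neither Lemma 2.5 nor the order-statistics representation, it never uses the two-sided moment hypothesis (only the upper bound matters), and it yields a strictly stronger intermediate estimate before the final weakening. Two small points. First, as written your constant $c$ comes from Lemma 5.1 applied to the moment hypothesis and hence depends on $\alpha$, so your final $C_{1}=1/c$ would not be universal as the statement requires; this is repaired by deriving the tail bound $\mathbb{P}\{\vert Y_{i}\vert\ge s\}\le 2\exp\bigl(-(s/\beta_{i})^{\alpha}\bigr)$ directly from the definition of $\Vert\cdot\Vert_{\psi_{\alpha}}$ via Markov's inequality applied to $\exp(\vert Y_{i}\vert^{\alpha}/\beta_{i}^{\alpha})$, which gives $c=1$ and makes $C_{0},C_{1}$ (and in fact even $C(\alpha)$) universal. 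Second, your closing claim that the moment-generating-function route of Theorem 3.1 ``does not transfer'' to $\alpha>1$ is not accurate: the paper does make that route work, precisely by comparing with the $1/\alpha$-th powers of exponential variables rather than union-bounding Laplace transforms; this mischaracterization does not, however, affect the validity of your argument.
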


\begin{proof}

Let $\xi_{1},\cdots, \xi_{n}$ be a sequence of independent exponential random variables with parameter $1$. Then we have
\begin{align}
	\mathbb{E}(\vert\xi_{i}\vert^{p/\alpha})^{1/p}\asymp p^{\alpha^{-1}}.\nonumber
\end{align}

Let $\beta_{i}=\Vert Y_{i}\Vert_{\psi_{\alpha}}$. Then, we have by Markov's inequality
\begin{align}\label{5.4}
	&\mathbb{P}\big\{\max_{i}\vert Y_{i}\vert-2^{\alpha^{-1}}(\max_{i}\beta_{i})(\mathbb{E}\max_{i}\xi_{i})^{\alpha^{-1}}\ge t\big\}\nonumber\\
	\le &e^{-\lambda t}\mathbb{E}\exp\Big(\lambda\big(\max_{i}\vert Y_{i}\vert-2^{\alpha^{-1}}(\max_{i}\beta_{i})(\mathbb{E}\max_{i}\xi_{i})^{\alpha^{-1}}\big)\Big)\nonumber\\
	\le &e^{-\lambda t}\mathbb{E}\exp\Big(\lambda\max_{i}\beta_{i}\big(\max_{i} \xi_{i}^{\alpha^{-1}}-2^{\alpha^{-1}}(\mathbb{E}\max_{i}\xi_{i})^{\alpha^{-1}}\big)\Big),
\end{align}
where the second inequality is due to Lemma 2.5.

 Note that
 \begin{align}
 	\max_{i}\xi_{i}^{\alpha^{-1}}\le 2^{\alpha^{-1}}\big(\vert \max_{i}\xi_{i}-\mathbb{E}\max_{i}\xi_{i}\vert^{\alpha^{-1}}+(\mathbb{E}\max_{i}\xi_{i})^{\alpha^{-1}}\big).\nonumber
 \end{align}

Hence, (\ref{5.4}) is further bounded by
\begin{align}
	&e^{-\lambda t}\mathbb{E}\exp\big(\lambda(\max_{i}\beta_{i})\vert2\max_{i} \xi_{i}-2\mathbb{E}\max_{i}\xi_{i}\vert^{\alpha^{-1}}\big)\nonumber\\
	\le &e^{-\lambda t}\mathbb{E}\exp\big(2\lambda(\max_{i}\beta_{i})\vert\max_{i} \xi_{i}-\mathbb{E}\max_{i}\xi_{i}\vert\big)\nonumber\\
	\le & e^{-\lambda t} e^{-C_{2}\max_{i}\beta_{i}\lambda},\nonumber
\end{align}
where the first inequality is due to Lemma 2.5, the second inequality follows from Remark 2.1, $C_{2}$ is a universal constant, and $0<\lambda\le (C_{2}\max_{i}\beta_{i})^{-1}$.

Let $\lambda =(C_{2}\max_{i}\beta_{i})^{-1}$. Then we get the desired result.
\end{proof}

Let $X$ be an $n\times n$ symmetric random matrix with independent but non-identically distributed centered random entries satisfying for all $p\ge 1$
\begin{align}
	(\mathbb{E}\vert X_{ij}\vert^{p})^{1/p}\asymp\Vert X_{ij}\Vert_{\psi_{\alpha}}p^
	{\alpha^{-1}}, \nonumber
\end{align}
where $1\le \alpha\le 2$. For convenience, we define
$$ b(\alpha):=\max_{i\le n}\Vert X_{ii}\Vert_{\psi_{\alpha}}\log^{\alpha^{-1}}n,$$
$\sigma_{1}(\alpha):=\max_{ij}\Vert X_{ij}\Vert_{\psi_{\alpha}}$ and $\sigma_{2}(\alpha):=\max_{i\neq j}\Vert X_{ij}\Vert_{\psi_{\alpha}}$.
\begin{mytheo}
	The spectral norm of $X$ satisfies
	\begin{align}
		\mathbb{P}\{\Vert X\Vert\ge C_{3}(\alpha)(b(\alpha)+\sigma_{2}(\alpha)\sqrt{n})+t\}\le C_{4}\exp(-\frac{C_{5}t}{\sigma_{1}(\alpha)})\nonumber,
	\end{align}
where $C_{3}(\alpha) $ is a positive constant independent of $n$ but depending on $\alpha$ and $C_{4}, C_{5}$ are universal constants.
\end{mytheo}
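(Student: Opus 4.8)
The strategy is to run the argument of the proof of Theorem 1.1 almost verbatim, replacing the diagonal estimate supplied by Theorem 3.1 with its $\psi_\alpha$-analogue, Theorem 5.1. The observation that makes this transfer essentially free is that for $1\le\alpha\le2$ every $\psi_\alpha$ variable is sub-Exponential: from the hypothesis $(\mathbb{E}|X_{ij}|^p)^{1/p}\asymp\|X_{ij}\|_{\psi_\alpha}p^{\alpha^{-1}}$ and $\alpha^{-1}\le1$ one has $(\mathbb{E}|X_{ij}|^p)^{1/p}\lesssim\|X_{ij}\|_{\psi_\alpha}\,p$ for all $p\ge1$, so property (2) of Lemma 2.1 yields $\|X_{ij}\|_{\psi_1}\lesssim\|X_{ij}\|_{\psi_\alpha}$. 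Consequently $\sigma_1\lesssim\sigma_1(\alpha)$ and $\sigma_2\lesssim\sigma_2(\alpha)$, and there is a universal constant $C$ with $\mathbb{E}\exp(|X_{ij}|/(C\sigma_2(\alpha)))\le2$. The geometric part of the argument — the slicing of $z\in S^{n-1}$ into the blocks $E_0,\dots,E_l$, the construction of the net $\mathcal{M}$, the splitting $\langle Xx,x\rangle=\sum_i x(i)^2X_{ii}+D_x$ with $D_x=D_x'+D_x''$, and the contraction inequality (\ref{contraction}) — is purely deterministic and carries over without change.

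For the diagonal term I would apply Theorem 5.1 to the diagonal matrix with entries $X_{11},\dots,X_{nn}$. Since $\max_i\|X_{ii}\|_{\psi_\alpha}\le\sigma_1(\alpha)$ and $\max_i\|X_{ii}\|_{\psi_\alpha}\log^{\alpha^{-1}}n=b(\alpha)$, this gives
$$\mathbb{P}\Big\{\sup_{x\in\mathcal{M}}\sum_{i}x(i)^2X_{ii}\ge C(\alpha)b(\alpha)+t\Big\}\le C_0\exp\big(-C_1 t/\sigma_1(\alpha)\big),$$
the exact analogue of (\ref{end1}), where now the tail is merely linear because Theorem 5.1, unlike Theorem 3.1, produces only a linear tail for general $\alpha$. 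For the off-diagonal term $D_x$, the entire chaining estimate of the proof of Theorem 1.1 goes through with $\sigma_2$ replaced by $\sigma_2(\alpha)$: the only place the distribution entered was the moment generating function bound $\mathbb{E}\exp(\sum_{i\in E}\sum_{j\in F\setminus E}|v(i)X_{ij}|/\sigma_2)=\prod\mathbb{E}\exp(|v(i)X_{ij}|/\sigma_2)\le2^{|E|}$, and the reduction recorded above shows this remains valid, up to a universal constant in the exponent, with $\sigma_2(\alpha)$ in place of $\sigma_2$ (that constant merely rescales each $L_k$ and is absorbed). Running the union bounds over the nets exactly as in (\ref{17})--(\ref{end2}) then yields
$$\mathbb{P}\Big\{\sup_{x\in\mathcal{M}}D_x>C\sigma_2(\alpha)\sqrt n+C\sigma_2(\alpha)u\Big\}\le C'e^{-u}.$$

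Combining the two estimates through the contraction inequality (\ref{contraction}) and a union bound, and using $\sigma_2(\alpha)\le\sigma_1(\alpha)$ so that the off-diagonal tail $e^{-t/\sigma_2(\alpha)}$ is dominated by $e^{-t/\sigma_1(\alpha)}$, gives the claimed bound with shift $C_3(\alpha)(b(\alpha)+\sigma_2(\alpha)\sqrt n)$ and tail $C_4\exp(-C_5 t/\sigma_1(\alpha))$; the dependence on $\alpha$ enters only through the constant $C(\alpha)$ of Theorem 5.1, namely the $\log^{\alpha^{-1}}n$ scale and the comparison constants, while the off-diagonal constants remain universal. I do not expect a genuine obstacle: the heavy lifting, the novel chaining for $D_x$, was already carried out for $\psi_1$ and needs no modification, and the only new ingredient is the diagonal bound of Theorem 5.1. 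The one point that must be respected is the restriction $1\le\alpha\le2$, which is precisely what guarantees $\psi_\alpha\subseteq\psi_1$ and hence that the off-diagonal moment generating functions are finite; for $\alpha<1$ the entries would be heavier-tailed than sub-Exponential and this transfer would break down, which is why the theorem is stated only for $1\le\alpha\le2$.
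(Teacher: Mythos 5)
Your proposal is correct and takes essentially the same approach as the paper: the paper's own proof reduces Theorem 5.2 to checking that a $\psi_\alpha$ variable with $1\le\alpha\le2$ has a bounded exponential moment (it shows $\mathbb{E}e^{\vert\xi\vert}\le 2+e$ by truncating at $\vert\xi\vert=1$, where you instead invoke the moment characterization in Lemma 2.1 to get $\Vert\cdot\Vert_{\psi_1}\lesssim\Vert\cdot\Vert_{\psi_\alpha}$), after which the chaining bound for $D_x$ goes through with $\sigma_2(\alpha)$ and the diagonal term is handled by Theorem 5.1, exactly as you describe.
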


	We remark that the chaining method we used to approximate the sub-Exponential case is unsuitable for heavier tail distributions. Hence, we can not  easily generalize our main result to the cases $0<\alpha<1$.
	
\begin{proof}
	To prove Theorem 5.2, we only need to get a similar bound to (\ref{12}), that is, to estimate the following quantity,
	$
		\mathbb{E}e^{\vert \xi\vert},\nonumber
	$
where $(\mathbb{E}\vert\xi\vert^{p})^{1/p}\asymp p^{1/\alpha}$ and $\Vert\xi\Vert_{\psi_{\alpha}}\le 1$.  In fact,

\begin{align}
	\mathbb{E}e^{\vert \xi\vert}&= \mathbb{E}e^{\vert \xi\vert}\mathbb{I}_{\{\vert \xi\vert\le 1\}}+\mathbb{E}e^{\vert \xi\vert}\mathbb{I}_{\{\vert \xi\vert> 1\}}\nonumber\\
	&\le e+\mathbb{E}\exp(\vert \xi\vert^{\alpha^{-1}})\mathbb{I}_{\{\vert \xi\vert> 1\}}\le 2+e.\nonumber
\end{align}
Then, we can get the desired result by adjusting some parameters.
\end{proof}

\subsection{Discussion} Our main result, Theorem 1.1, gives a tail bound of $\Vert X\Vert$. By virtue of (\ref{1}), we have
\begin{align}
	\mathbb{E}\Vert X\Vert\lesssim \max_{i\le n}\Vert X_{ii}\Vert_{\psi_{1}}\log n+\max_{i\neq j}\Vert X_{ij}\Vert_{\psi_{1}}\sqrt{n}.\nonumber
\end{align}
Obviously, this bound contains less information about the structure matrix. An interesting question is how to optimize this bound in concentration inequalities.

van Handel and Bandeira \cite{aop} obtained a nice bound (\ref{3}) for $\mathbb{E}\Vert X\Vert$ using the known combinatorial method based on the bounding trace of high powers of $X$. Besides, van Handel \cite{tams} proved an important dimension-free bound by Slepian's comparison inequality. Finally, based on the results of \cite{aop,tams}, authors of \cite{inventions} got a sharp bound for $\mathbb{E}\Vert X\Vert$.

Unfortunately, the combinatorial method used in \cite{aop} is unsuitable for what we care about, i.e.,  concentration inequalities. In particular, the quantity $\text{Tr}[X^{2p}]$ can be expanded as
\begin{align}\label{5.5}
	\text{Tr}[X^{2p}]=\sum_{u_{1},\cdots,u_{2p}\in [n]}X_{u_{1}u_{2}}X_{u_{2}u_{3}}\cdots X_{u_{2p}u_{1}},
\end{align}
where $([n], E_{n})$ is the complete graph on $n$ points. Note that $\mathbb{E}\xi^{i}=0$ when $\xi$ is symmetric and $i$ is odd. Hence, we can eliminate many summands in (\ref{5.5}) when considering the quantity $\mathbb{E}\text{Tr}[X^{2p}]$. Nevertheless, for the tail bounds, this property does not exist.

     In the progress of investigating the upper bound of $\mathbb{E}\Vert X\Vert$, authors \cite{aop,tams} also study the lower bound to obtain more information about $\Vert X\Vert$, which inspires us to consider the lower tail bounds, i.e., the lower bounds of $\mathbb{P}\{\Vert X\Vert>C(n, t, X_{ij})\}$.

     For Gaussian processes $(Y_{t})_{t\in T}$ and $(Z_{t})_{t\in T}$, Slepian's comparison inequality (Theorem 7.2.1 in \cite{highdimension}) states that if the process $(Y_{t})_{t\in T}$ grows faster (in terms of the magnitude of the increments), the further it gets, i.e., $$\mathbb{P}\{\sup_{t\in T}X_{t}\ge u\}\le \mathbb{P}\{\sup_{t\in T}Y_{t}\ge u\}.$$ Although Slepian's inequality is an excellent tool to study the lower tail bounds of $\Vert X\Vert$, we struggle to generalize this result to sub-exponential cases. We are  still considering how to get proper lower tail bounds of $\Vert X\Vert$ in our setting.

%%%%%%%%%%%%%%%%%%%%%%%%%%%%%%%%%%%%%%%%%%%%%%2^{}
%% Example with single Appendix:            %%
%%%%%%%%%%%%%%%%%%%%%%%%%%%% %%%%%%%%%%%%%%%%%

%%%%%%%%%%%%%%%%%%%%%%%%%%%%%%%%%%%%%%%%%%%%%%
%% Support information, if any,             %%
%% should be provided in the                %%ow
%% Acknowledgements section.                %%
%%%%%%%%%%%%%%%%%%%%%%%%%%%%%%%%%%%%%%%%%%%%%%

\vskip2mm
{\bf Acknowledgements } The authors would like to express their gratitude to the editor and anonymous referee for careful reading and constructive comments.

%%===========================================================================================%%
%% If you are submitting to one of the Nature Portfolio journals, using the eJP submission   %%
%% system, please include the references within the manuscript file itself. You may do this  %%
%% by copying the reference list from your .bbl file, paste it into the main manuscript .tex %%
%% file, and delete the associated \verb+\bibliography+ commands.                            %%
%%===========================================================================================%%

\end{document}